%% file: main_SINUM1.tex
\documentclass[onefignum,onetabnum]{siamart251216}

\input{ex_shared}
\usepackage{xr}

\begin{document}
\maketitle
     
\begin{abstract}
 In this work, we develop an accurate numerical homogenization framework for
computing effective Hamiltonians of quasiperiodic Hamilton--Jacobi equations
(QHJEs) with convex Hamiltonians of the form
$H(x,p) = |p|^k/{k}-f(x),  ~k>1$,
where $f$ is quasiperiodic. Computing effective Hamiltonians in the quasiperiodic setting requires solving QHJEs posed on the whole space. Their solutions generally possess neither translational symmetry nor decay and may exhibit low regularity. These features pose substantial challenges for numerical
computation. To address these difficulties, we introduce a quasiperiodic boundary
condition, which allows the original whole-space problem to be treated on a
bounded domain while preserving quasiperiodicity at the boundary. We then propose an SL--FPR scheme that combines a semi-Lagrangian approximation with
the finite points recovery method and establish stability and error estimates for the resulting scheme.  We also extend the quasiperiodic homogenization
result from the quadratic case to general $k>1$ and apply the proposed method to accurately approximate
the corresponding effective Hamiltonians. Numerical experiments
illustrate the convergence and applicability of the method and validate the extended homogenization results.

\end{abstract}

\begin{keywords}
	Quasiperiodic Hamilton--Jacobi equations, Quasiperiodic boundary conditions, Finite points recovery  method,   Quasiperiodic homogenization, Effective Hamiltonian.
\end{keywords}
	
\begin{AMS}
	35F21, 65M12, 65M25, 35B27
\end{AMS}
	
\section{Introduction}\label{sec:introduction}

\subsection{Motivation}
For each $\varepsilon>0$, let $u^\varepsilon\in C (\mathbb{R} \times [0,\infty))$
be the viscosity solution to the multiscale Hamilton--Jacobi equation with
quasiperiodic Hamiltonian
\[
\begin{cases}
u_t^\varepsilon+
H\!\left(\dfrac{x}{\varepsilon}, u^\varepsilon_x\right)=0,
& \text{in } \mathbb{R}\times(0,\infty),\\
u^\varepsilon(x,0)=u_0(x),
& \text{on } \mathbb{R}\times\{t=0\},
\end{cases}
\]
where the initial data satisfies $u_0\in \mathbf{BUC}(\mathbb{R})\cap\mathbf{Lip}(\mathbb{R})$.
In the quasiperiodic case, it has been established in \cite{ishii2000almost,tran2021hamilton} that
$u^\varepsilon$ converges uniformly on $\mathbb{R}\times[0,\infty)$, as
$\varepsilon\to0^+$, to a homogenized solution $u$, which satisfies
\[
\begin{cases}
u_t+\overline H(u_x)=0,
& \text{in } \mathbb{R}\times(0,\infty),\\
u(x,0)=u_0(x),
& \text{on } \mathbb{R}\times\{t=0\}.
\end{cases}
\]
Here, $\overline H$ is the effective Hamiltonian. Computing $\overline H$ is a
central problem in quasiperiodic homogenization, since it determines the
macroscopic properties of the original multiscale problem.

For a fixed \(p\in\mathbb{R}\), the effective Hamiltonian
\(\overline H(p)\) is the unique constant such that the following
\(\delta\)-approximate corrector problem admits a viscosity solution \(v\)
\[
\forall\,\delta>0,\qquad
\overline H(p)-\delta
\leq H\bigl(x,p+v'(x)\bigr)
\leq \overline H(p)+\delta,
\qquad x\in\mathbb{R}.
\]

For $\delta=0$, an exact corrector may not exist in general \cite{lions2003correctors}. However, for
certain convex mechanical Hamiltonians with quasiperiodic potentials, the
corrector problem can reduce to the exact cell problem
\[
H(x,p+v'(x))=\overline H(p),
\qquad x\in\mathbb{R}.
\]
In the quadratic case, this reduction yields an explicit formula for
\(\overline H(p)\) \cite{hu2024polynomial}. Whether
such a zero-order ($\delta = 0$) approximation corrector formulation and the corresponding effective
Hamiltonian formula remain valid for more general cases $k>1$ is one of the main
issues addressed in this work.


To obtain an accurate approximation of \(\overline H\), we use the
large-time averaging formula \cite{giga2021existence}, which relates the
effective Hamiltonian to the asymptotic behavior of the solution to
\[
w_t+H(x,p+w')=0,
\qquad (x,t)\in\mathbb{R}\times(0,\infty).
\]
The asymptotic speed of \(w\) then yields
\[
\overline H(p)
=
-\lim_{t\to\infty}\frac{w(x,t)}{t}.
\]

Motivated by this connection, in this work we consider the following quasiperiodic
Hamilton--Jacobi equations (QHJEs) 
\begin{equation}
\begin{cases}
u_t+H(x,u_x)=0,
& \text{in } \mathbb{R}\times(0,\infty),\\
u(x,0)=u_0(x),
& \text{on } \mathbb{R}\times\{t=0\},
\end{cases}
\label{eqn:QHJE}
\end{equation}
where $u_0\in \mathbf{BUC}(\mathbb{R})\cap\mathbf{Lip}(\mathbb{R})$ is quasiperiodic  (see \Cref{def:QP_fun}).  For the Hamiltonian, we focus on the \(k\)-power convex form
\begin{equation}
H(x,p)=\frac{|p|^k}{k}-f(x),
\qquad k>1,
\label{eqn:k_hamiltonian}
\end{equation}
which satisfies the following basic
assumptions
\begin{enumerate}[label={(\alph*).}]
    \item \(f\) is quasiperiodic, and hence
    \(H(\cdot,p)\) is quasiperiodic for each fixed \(p\in\mathbb R\);\label{itm:quasiperiodic}
    \item  uniformly coercive in $p$, that is $\displaystyle\lim\limits_{|p|\to\infty}\frac{H(x, p)}{|p|}=\infty$;\label{itm:coercive}
   \item convex in \(p\): for each fixed \(x\in\mathbb R\), the map \(p\mapsto H(x,p)\) is convex.\label{itm:convex}
\end{enumerate}

It is clear that accurately solving the QHJE \eqref{eqn:QHJE} provides the basis for the computation of effective Hamiltonians.
This issue is the main focus of the subsequent sections.

\subsection{Review and contributions}
The QHJE \eqref{eqn:QHJE} plays an important role in a wide range of  scientific and engineering problems, including celestial and classical mechanics, dynamical systems, as well as models of non-Newtonian and complex fluids \cite{levine1984quasicrystals,maestrello1979quasi,poincare1890problem,sochi2014using,kamrin2014symmetry}.  Beyond these physical applications, the QHJE is also deeply connected to the theoretical foundations of  optimal control theory, forming an essential component in the analysis of  value functions \cite{bardi1997optimal,deville1993smooth}. The theoretical analysis of QHJEs has been well-established through classical tools such as vanishing viscosity
 method and variational principles \cite{lions1982generalized,evans2022partial,crandall1983viscosity}. These analysis tools provide a solid foundation for proving the existence, uniqueness, and stability of solutions to QHJEs in the convex settings.  

In the periodic setting, homogenization theory for Hamilton--Jacobi equations
was initiated by Lions, Papanicolaou, and Varadhan \cite{lions1987homogenization} and further developed in
the convex setting \cite{concordel1996periodic, concordel1997periodic, mitake2014homogenization, contreras1998lagrangian}. Based on these theoretical results, numerical
methods for computing periodic Hamilton--Jacobi equations and effective Hamiltonians have been studied extensively
\cite{qian2018min, falcone2008on, gomes2004computing, cacace2016generalized, osher1988fronts,li2003numerical}. However, the corresponding research in the quasiperiodic case remains far less developed.  This limited development is mainly due to the numerical difficulties
associated with quasiperiodic solutions, which lack translational
invariance and decay and may have low regularity.   A natural approach is using the periodic approximation method (PAM) to approximate a periodic solution within a finite-size domain. However,  this treatment cannot preserve the quasiperiodic nature of the solutions to QHJEs, and introduces Diophantine approximation errors arising from the approximation of irrational numbers by rational ones \cite{jiang2023approximation}.  

To avoid the Diophantine error, two accurate algorithms designed for quasiperiodic systems have been proposed recently. The first one is projection method (PM) \cite{jiang2014numerical,jiang2018numerical}. The PM embeds a low-dimensional quasiperiodic system into a
higher-dimensional periodic system, where the problem can be solved using
standard numerical discretizations. It has been successfully applied to
quasiperiodic elliptic and parabolic equations, as well as to their numerical
homogenization \cite{jiang2025projection,jiang2024convergence}. However, the PM is often combined with Fourier spectral collocation discretizations, which are highly effective for smooth quasiperiodic systems but
may be less suitable for QHJEs with nonsmooth solutions or low-regularity
gradients. To handle both high- and low-regularity quasiperiodic problems, another accurate algorithm,  the finite points recovery (FPR) method has been proposed in  \cite{jiang2024accurately}. By exploiting the homomorphism between the physical domain and the irrational manifold defining the quasiperiodic structure, the FPR method  recovers the global quasiperiodic system by employing local interpolation technique with finite points in the computational domain, making it well suited for low-regularity settings and systems involving multiple irrational numbers.

In this paper, we formulate a computable QHJE and propose a high-accuracy numerical algorithm for its solution. Moreover, we apply the proposed framework to quasiperiodic homogenization. Our contributions are summarized as follows.
\begin{itemize}
    
\item{We introduce a Dirichlet-type quasiperiodic boundary condition (QBC)
\cite{han2026accurately} for QHJEs, which preserves the quasiperiodic
structure and long-range order at the boundary.
Based on this boundary treatment, we obtain a computable reformulation
of the finite-size QHJE for accurate numerical approximation.}

\item{We propose a high-accuracy numerical algorithm, termed the SL--FPR scheme, which combines the SL scheme with the FPR method for solving QHJEs. The scheme overcomes the difficulty of accurately evaluating quasiperiodic characteristic feet that fall outside the computational domain. Moreover, we provide a rigorous numerical analysis of the SL-FPR scheme.}

\item{We present two classes of numerical experiments, including the accurate solution of QHJEs and the evaluation of effective Hamiltonians. The results not only verify the error analysis of the SL–FPR scheme, but also numerically confirm the extended homogenization result for general \(k>1\).}
\end{itemize}

The article is structured as follows. In \Cref{sec:preliminaries}, we provide an overview of quasiperiodic functions, along with a concise introduction to the SL scheme. In \Cref{sec:qbc}, we construct a computable formulation of the QHJE by restricting the original whole-space problem to a finite domain equipped with quasiperiodic boundary conditions, and we present its numerical implementation using the FPR method. In \Cref{sec:numerical_methods}, we discretize the QHJE by the SL-FPR scheme and give the corresponding numerical analysis. In \Cref{sec:quasi_hom}, we extend the quasiperiodic homogenization theory for \eqref{eqn:k_hamiltonian} from the case $k=2$ to the general case $k>1$ and apply our algorithm to  accurately compute the effective Hamiltonians.
 In \Cref{sec:numerical_tests}, we present a series of numerical experiments to validate the accuracy of the SL-FPR scheme and the theoretical results.  In \Cref{sec:conclusion}, we summarize the conclusions and provide an outlook for future work.

\section{Preliminaries}\label{sec:preliminaries}
In this section, we give the definition of quasiperiodic functions and some of their necessary properties in \Cref{sec:pre_qpfunction}. Moreover, a brief introduction of the  semi-Lagrangian scheme is provided in \Cref{sec:pre_sls}. 
\subsection{Quasiperiodic functions}\label{sec:pre_qpfunction}
We first recall the definition of quasiperiodic functions and the properties needed in the subsequent analysis.
\begin{definition}\label{def:qp_function}
A matrix \(\bm{P}\in\mathbb{R}^{d\times n}\) is called a projection matrix if
\[
\bm{P}\in\mathbb{P}^{d\times n}
:=
\left\{
\bm{P}\in\mathbb{R}^{d\times n}:~
\operatorname{rank}_{\mathbb{Q}}(\bm{P})=n
\right\}.
\]
\end{definition}

\begin{definition}\label{def:QP_fun}
    A continuous function $f(\bm{x}) : \mathbb{R}^d \mapsto \mathbb{R}$ is quasiperiodic 
    if there exist  an $n$-dimensional periodic function $F\in C(\mathbb{T}^n)$,  and a projection matrix  $\bm{P}\in \mathbb{P}^{d\times n}$  such that 
    \begin{equation}\label{eqn:qp_homo}
        f(\bm{x})=F(\mathcal{C}(\bm{x})),\qquad \mathcal{C}(\bm{x}) \coloneqq (\bm{P}^T\bm{x})/(2\pi\mathbb{Z}^n) \in \mathbb{T}^n.
    \end{equation}
    Here, $F$ is called the parent function of $f$ and $\mathcal{C}(\bm{x})$ is the phase of $\bm{x}$. The set $\rm{QP}(\mathbb{R}^d)$ consists of all $d$-dimensional quasiperiodic functions. 
\end{definition}



According to \cite{fan2025representation}, the map $\mathcal{C} : \mathbb R^d\to\mathbb T^n$ is a homomorphism between $\mathbb{R}^d$ and the irrational manifold $(\bm{P}^T\bm{x})/(2\pi\mathbb{Z}^n)$.
Moreover, a fundamental property of quasiperiodic functions is given in \cite[Theorem~1.1]{fan2025representation} and summarized in the following lemma.

\begin{lemma}
	\label{lemma:fpr_homo}
	Let $\bm{P}\in\mathbb{P}^{d\times n}$.
	The $\mathbb{R}^d$-action on $\mathbb{T}^n$ determined by
	$\{\mathcal{C}({\bm{x}})\}_{\bm{x}\in\mathbb{R}^d}$
	is uniquely ergodic and minimal. Consequently,
	\begin{equation*}
		\overline{
			\left\{
			\mathcal{C}({\bm{x}})
			:
			\bm{x}\in\mathbb{R}^d
			\right\}}
		=
		\mathbb{T}^n.
	\end{equation*}
\end{lemma}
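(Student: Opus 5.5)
The plan is to reduce everything to the linear flow on $\mathbb{T}^n$ and to Weyl's equidistribution criterion via characters. The action is $\bm{x}\cdot\bm{\theta}=\bm{\theta}+\bm{P}^T\bm{x} \pmod{2\pi\mathbb{Z}^n}$, so it is a translation action by the subgroup $\Gamma:=\{\bm{P}^T\bm{x}\bmod 2\pi\mathbb{Z}^n:\bm{x}\in\mathbb{R}^d\}$, and $\mathcal{C}$ is the orbit map of $\bm 0$. Lebesgue (Haar) measure $\mu$ on $\mathbb{T}^n$ is clearly invariant. The key algebraic input is the rank condition $\operatorname{rank}_{\mathbb{Q}}(\bm{P})=n$, which I interpret as: the $n$ columns $\bm{p}_1,\dots,\bm{p}_n\in\mathbb{R}^d$ of $\bm{P}$ are linearly independent over $\mathbb{Q}$. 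Equivalently, for $\bm{k}\in\mathbb{Z}^n$ one has $\bm{P}\bm{k}=\bm 0$ only when $\bm{k}=\bm 0$.

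First I prove unique ergodicity. Let $\nu$ be any invariant Borel probability measure and compute its Fourier coefficients $\hat\nu(\bm{k})=\int e^{-i\bm{k}\cdot\bm{\theta}}\,d\nu$. Invariance under translation by $\bm{P}^T\bm{x}$ gives
\[
\hat\nu(\bm{k})=e^{-i\bm{k}\cdot\bm{P}^T\bm{x}}\,\hat\nu(\bm{k})=e^{-i(\bm{P}\bm{k})\cdot\bm{x}}\,\hat\nu(\bm{k})
\]
for all $\bm{x}\in\mathbb{R}^d$. For $\bm{k}\neq\bm 0$ we have $\bm{P}\bm{k}\ne\bm 0$, so there is an $\bm{x}$ with $e^{-i(\bm{P}\bm{k})\cdot\bm{x}}\neq1$, and hence $\hat\nu(\bm{k})=0$. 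A measure on the torus is determined by its Fourier coefficients, so $\nu=\mu$, and unique ergodicity follows. An alternative route is to show directly that averages of characters over large balls, $\frac{1}{|B_R|}\int_{B_R}e^{i(\bm{P}\bm{k})\cdot\bm{x}}\,d\bm{x}$, tend to $0$ for $\bm{k}\ne\bm0$. Density of trigonometric polynomials then gives uniform convergence of ergodic averages for every $F\in C(\mathbb{T}^n)$, which is also the form needed later for averaging formulas.

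Next I derive minimality. Suppose some orbit closure $K=\overline{\bm{\theta}+\Gamma}$ were a proper closed subset. It is a nonempty closed invariant set, and so is each of its translates. By Markov–Kakutani (or Krylov–Bogolyubov, since the acting group $\mathbb{R}^d$ is abelian and hence amenable), the restricted action on the compact set $K$ carries an invariant probability measure. Its support lies in $K$, so the measure is not $\mu$, whose support is all of $\mathbb{T}^n$. This contradicts uniqueness. Hence every orbit is dense. Specializing to the orbit of $\bm 0$, which is exactly $\{\mathcal{C}(\bm{x})\}$, gives $\overline{\{\mathcal{C}(\bm{x}):\bm{x}\in\mathbb{R}^d\}}=\mathbb{T}^n$. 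A direct alternative: $\overline\Gamma$ is a closed subgroup of $\mathbb{T}^n$; if it were proper, there would be a nontrivial character $\bm{k}$ vanishing on it, meaning $\bm{k}\cdot\bm{P}^T\bm{x}\in2\pi\mathbb{Z}$ for all $\bm{x}$. By continuity in $\bm{x}$ this forces $\bm{P}\bm{k}=\bm 0$, a contradiction.

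The main obstacle is not analytic but lies in pinning down the meaning of $\operatorname{rank}_{\mathbb{Q}}$. The whole argument hinges on the implication $\bm{k}\in\mathbb{Z}^n\setminus\{\bm0\}\Rightarrow\bm{P}\bm{k}\neq\bm0$, so I would state explicitly that the definition means $\mathbb{Q}$-linear independence of the columns of $\bm{P}$. The remaining steps are standard harmonic analysis on $\mathbb{T}^n$.
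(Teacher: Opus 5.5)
Your proof is correct, but it is not the paper's route, because the paper gives no proof of this lemma. It imports the statement verbatim from \cite[Theorem~1.1]{fan2025representation}, where it sits inside a broader representation theory for quasiperiodic and Besicovitch almost periodic functions.

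You instead give a self-contained harmonic-analysis argument in three steps:
\begin{itemize}
\item The only input is that $\operatorname{rank}_{\mathbb{Q}}(\bm{P})=n$ means $\bm{P}\bm{k}\neq\bm{0}$ for all $\bm{k}\in\mathbb{Z}^n\setminus\{\bm{0}\}$. Your column reading of the definition is the right one: in the paper's example $\bm{P}=(1,\sqrt{2})$ with $d=1$, $n=2$, a row-rank reading could never reach $n$.
\item This condition kills every nonzero Fourier coefficient of an invariant measure, which gives unique ergodicity.
\item Minimality then follows because the unique invariant measure (Haar) has full support. You are right to make this step explicit, since unique ergodicity alone does not imply minimality.
\end{itemize}

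Your annihilator argument for $\overline{\Gamma}=\mathbb{T}^n$ is shorter still. It is also all that the FPR construction in \Cref{sec:fpr_qbc} actually uses, namely density of the phases. The uniform convergence of character averages you mention is exactly the ingredient behind the ergodic limit invoked in the proof of \Cref{thm:keh}.

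What the citation buys is brevity and a link to the more general framework of the cited work. What your proof buys is transparency about exactly where the rank hypothesis enters, and independence from an external preprint.
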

\begin{figure*}[!hbpt]
\centering
\includegraphics[width=12cm]{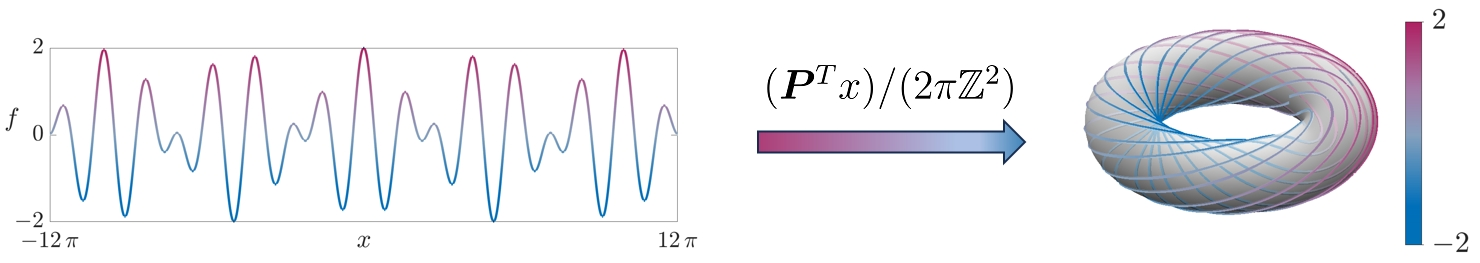}
\caption{Left: Quasiperiodic function $f(x)=\cos x + \cos(\sqrt{2}x)$ restricted to $[-12\pi,12\pi]$; Right: Corresponding parent function $F((\bm{P}^Tx)/(2\pi\mathbb{Z}^2)),\,x\in[-12\pi,12\pi]$.}
\label{fig:qp_fun}
\end{figure*}


We present a concrete example to demonstrate \Cref{lemma:fpr_homo}.
Consider the quasiperiodic function
\(f(x)=\cos x+\cos(\sqrt{2}\,x)\), defined on the entire real line.
For illustration, \Cref{fig:qp_fun} shows its restriction to
\([-12\pi,12\pi]\), together with the corresponding parent function
\(F(\mathcal{C}(x))\) on \(\mathbb{T}^2\).
As \(x\) varies, \(\mathcal C(x)\) traces the irrational orbit
\(\mathcal C(\mathbb R)\) in \(\mathbb T^2\).  By minimality, the full orbit is dense in
\(\mathbb{T}^2\), while unique ergodicity yields uniform distribution in
the infinite-domain limit. Consistent with these properties, the orbit
over the displayed finite interval already exhibits nearly uniform
coverage of \(\mathbb{T}^2\).



\subsection{Semi-Lagrangian scheme}\label{sec:pre_sls}
In this subsection, we  introduce the basic idea about the
semi-Lagrangian scheme for the Hamilton--Jacobi equation. First, we can reformulate the convex QHJE \eqref{eqn:QHJE} from a Lagrangian view.  To achieve this, we need to establish the relationship between the Hamiltonian and the Lagrangian, which leads us to the definition of the \emph{Legendre transform} \cite{tran2021hamilton}.
\begin{definition}\label{def:LT}
     For the Hamiltonian $H(x, p): \mathbb{R}\times\mathbb{R}\to\mathbb{R}$ which satisfies
    \begin{equation}\label{eqn:LT}
    \left\{\begin{array}{l}H \in C\left(\mathbb{R} \times \mathbb{R}\right), H \in \mathbf{BUC}\left(\mathbb{R} \times B(0, R)\right) \text { for each } R>0, \\ p \mapsto H(x, p) \text { is convex for all } x \in \mathbb{R}, \\ H \text { is superlinear in } p ; \text { that is, } \lim\limits_{|p| \rightarrow \infty}  \dfrac{H(x, p)}{|p|}=+\infty, \end{array}\right.    
    \end{equation}
     we define its Legendre transform $H^*=L: \mathbb{R}\times\mathbb{R}\to\mathbb{R}$ as
     \begin{equation}
        H^*(x, a) =  L(x, a)=\sup _{p \in \mathbb{R}}(p \cdot a-H(x, p)).
     \end{equation}
     Moreover, the Legendre transform of $L$ is defined as
     \begin{equation}
         H^{**}(x, p)=H(x, p)= \sup _{a \in \mathbb{R}}(a \cdot p-L(x, a)).
     \end{equation}
\end{definition}
By \Cref{def:LT}, 
we  can compute the Lagrangian \(L\) of the Hamiltonian \eqref{eqn:keh}. 
The result is stated in the following theorem (see also \cite[Chapter~2]{tran2021hamilton}).

\begin{theorem}
Let \(k>1\), and consider the Hamiltonian
\[
H(x, p)=\frac{1}{k}|p|^k - f(x),\qquad (x, p)\in\mathbb{R}\times\mathbb{R}.
\]
  Then the Legendre transform of \(H\)  is
\[
L(x,a)=\sup_{p\in\mathbb{R}}\big\{p\cdot a - H(x, p)\big\}
= \frac{1}{k'}\,|a|^{k'} + f(x),
\qquad k'=\frac{k}{k-1}.
\]
\end{theorem}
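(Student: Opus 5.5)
The term $-f(x)$ does not depend on $p$, so it comes straight out of the supremum. For fixed $(x,a)$ we can write
\[
L(x,a)=\sup_{p\in\mathbb{R}}\Big\{p\,a-\tfrac1k|p|^k\Big\}+f(x).
\]
This reduces the statement to computing the one-dimensional Legendre transform of $g(p)=\tfrac1k|p|^k$ and showing that $g^*(a)=\tfrac1{k'}|a|^{k'}$ with $k'=k/(k-1)$.

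\textbf{Step 1 (the maximization problem).} Let $\phi(p)=pa-\tfrac1k|p|^k$. Since $k>1$, $\phi$ is $C^1$ and strictly concave, with $\phi'(p)=a-|p|^{k-2}p$. Because $k>1$, we also have $\phi(p)\to-\infty$ as $|p|\to\infty$. Hence the supremum is attained, and it is attained exactly at the unique critical point, which solves $|p|^{k-2}p=a$. That point is $p^*=\operatorname{sgn}(a)\,|a|^{1/(k-1)}$. The case $a=0$ gives $p^*=0$ and needs no separate treatment.

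\textbf{Step 2 (evaluation).} At $p^*$ we have $p^*a=|a|^{1+1/(k-1)}=|a|^{k'}$. We also have $|p^*|^k=|a|^{k/(k-1)}=|a|^{k'}$. Therefore
\[
\phi(p^*)=|a|^{k'}\Big(1-\tfrac1k\Big)=\tfrac{1}{k'}|a|^{k'},
\]
because $1-1/k=1/k'$. Adding back $f(x)$ gives the claimed formula.

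As a cross-check, Young's inequality $pa\le \tfrac1k|p|^k+\tfrac1{k'}|a|^{k'}$ gives the upper bound, and equality holds precisely when $|p|^k=|a|^{k'}$ with $p$ and $a$ of the same sign. This confirms the value of the supremum without any calculus.

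There is no genuine obstacle here. The only point requiring care is justifying that the critical point is a global maximizer rather than merely a stationary point, and this follows from strict concavity together with coercivity (or directly from Young's inequality). One should also note that $H$ satisfies the hypotheses \eqref{eqn:LT}: it is convex and superlinear in $p$, and $f$ is bounded and uniformly continuous because it is quasiperiodic with a continuous parent function on $\mathbb{T}^n$. These hypotheses are what make $L$ finite and well defined.
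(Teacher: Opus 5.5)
Your proof is correct. The paper gives no proof of its own and simply defers to \cite[Chapter~2]{tran2021hamilton}; your argument, which pulls $f(x)$ out of the supremum and evaluates at the critical point $p^*=\operatorname{sgn}(a)|a|^{1/(k-1)}$ (with the equality case of Young's inequality as a check), is the standard computation behind that reference. One small correction: the identity holds pointwise in $x$ for any real-valued $f$, because $f(x)$ is just an additive constant in the supremum, so finiteness of $L$ comes from the superlinearity of $\frac1k|p|^k$ alone and the $\mathbf{BUC}$ hypotheses are not needed here.
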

Using the Legendre transform, the QHJE \eqref{eqn:QHJE} can be reformulated as the well-known quasiperiodic Hamilton--Jacobi--Bellman equation (QHJBE) in optimal control theory.

\begin{equation}\label{eqn:QHJBE}
\left\{\begin{aligned}
        &u_t-\min_{a\in\mathbb{R}}\left\{-a u_x +L(a)\right\}=0,\quad \mathrm{in}\quad \mathbb{R}\times (0,T), \\
        &u(x, 0) = u_0, \quad\mathrm{on}~~ \mathbb{R}\times\left\{t=0\right\}.
\end{aligned}
    \right.
\end{equation}
\begin{remark}\label{rem:min}
   In the QHJBE \eqref{eqn:QHJBE}, the strict convexity of \(L\) guarantees
a unique minimizer \(\bar a\). The first-order optimality condition yields
\[
\bar a=\operatorname{sgn}(p)|p|^{\frac{1}{k'-1}}.
\]
\end{remark}

To discretize \eqref{eqn:QHJBE}, we apply the SL scheme, which is known to be stable even for large time steps and to converge to the viscosity solution of the HJ equation provided that the solution is unique \cite{falcone2013semi}. To establish the approximation scheme, we first introduce a uniform grid in space and time  with constant steps $h$ and $\tau$, respectively, covering the domain
 \begin{equation*}
     \left\{(x_i, t_m) = (ih, m\tau), ~i, m\in\mathbb{N},~ i\leq \frac{|\Omega|}{h} = D, ~m\leq\frac{T}{\tau} = M\right\}.
 \end{equation*}
 
The SL  scheme  approximates the advection
 term as a directional derivative
 $$
au_x\approx-\frac{u^m(x_i-a\tau) - u(x_i, t_m)}{\tau},
 $$
and the time derivative is discretized by the forward Euler method
\begin{equation*}
    u_t(x_i,t_m)
    \approx
    \frac{u_i^{m+1}-u_i^m}{\tau}.
\end{equation*}
Substituting these approximations into \eqref{eqn:QHJBE}, we have
\begin{equation}\label{eqn:sls_QHJE}
    u_i^{m+1}
    =
    \min_{a\in\mathbb R}
    \left\{
        u^m(x_i-a\tau)
        +\tau L(x_i,a)
    \right\}.
\end{equation}


Here, $u^{m}(x_i-a\tau)$
denotes the solution at the \(m\)-th time level evaluated at the
characteristic foot \(x_i-a\tau\), obtained by tracing backward from
\(x_i\) along the characteristic direction \(a\) over one time step
\(\tau\).

\begin{remark}
At each time step, the SL scheme requires solving a one-dimensional minimization problem with respect to the control variable. Since the numerical solution is continuous and bounded, and the Lagrangian is continuous and superlinear with respect to the control variable, the corresponding objective function is continuous and coercive. Therefore, it admits at least one global minimizer; see \cite[Theorem~1.9]{rockafellar1998variational}.
In practice, the analytical control from \Cref{rem:min} is used as the center of a sufficiently large search interval, within which the SL objective is minimized numerically up to the prescribed tolerance.

\end{remark}

Since the position $x_i-a\tau$ appearing in \eqref{eqn:sls_QHJE} may not align exactly with the grid nodes, as illustrated in \Cref{fig:SL_trace}, it is necessary to compute the term  $u^{m}(x_i-a\tau)$ using reconstruction techniques.
\begin{figure}[!htbp]
    \centering
    \includegraphics[width=0.6\linewidth]{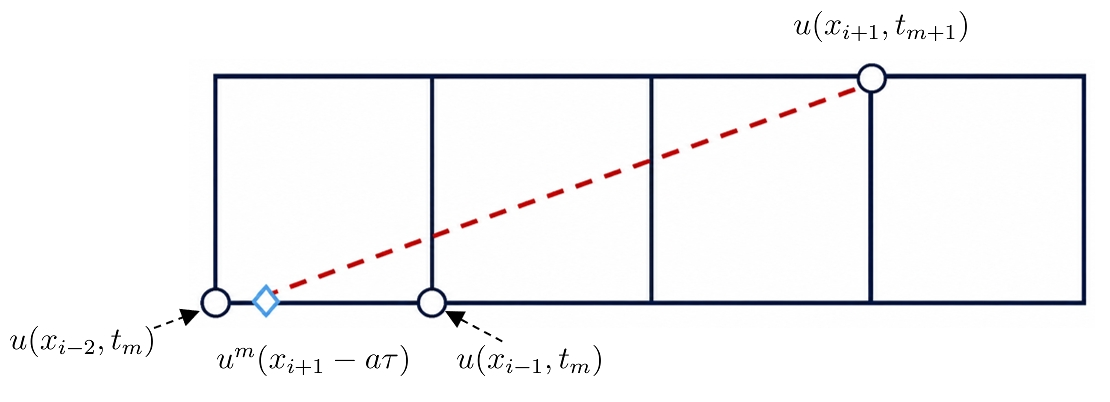}
    \caption{The trace-back process in the SL scheme.}
    \label{fig:SL_trace}
\end{figure}
 
 Moreover, since $x_i-a\tau$ may lie outside the
bounded computational domain \(\Omega\), appropriate boundary conditions must be imposed to restrict the computation to a finite domain. For the periodic case, the  implementation of periodic boundary conditions (PBCs) is very simple since we just
 set
 $$
u(x+T)=u(x),
 $$
where $T$ is the period of the solution. However, this treatment is not directly applicable in the quasiperiodic
setting because periodic approximation introduces a Diophantine error through rational approximation of the irrational numbers \cite{jiang2023approximation}.
To address this difficulty, we propose a QBC for the QHJE, under which the numerical solution at points outside the computable domain $\Omega$ (including $ \partial \Omega$) along the characteristics of \eqref{eqn:QHJE} can be recovered from a finite set of data points in $\Omega$, with the reconstruction realized through the 
FPR method.

\section{Quasiperiodic boundary conditions for QHJEs}\label{sec:qbc}
In this section, we develop a finite-domain formulation for QHJEs with the \(k\)-power convex Hamiltonian \begin{equation}\label{eqn:keh} 
H(x,p)=\frac{|p|^k}{k}-f(x), \qquad k>1. 
\end{equation} 
We first construct a Dirichlet-type QBC that preserves the quasiperiodic structure of the original problem after domain truncation. We then present the FPR method for its numerical implementation.

\subsection{Computable modeling for QHJEs}

As discussed in \Cref{sec:pre_sls}, the SL discretization for QHJEs
requires an appropriate boundary treatment on a bounded computational
domain. To establish a computable finite-domain model for QHJEs while preserving the quasiperiodic structure of the original problem, the
boundary values should remain consistent with the interior field. To
achieve this, we use the homomorphism \(\mathcal C\) introduced in
\Cref{sec:pre_qpfunction}, which associates the physical space with an
irrational manifold embedded in the high-dimensional torus. This
requirement leads to the following definition.
\begin{definition}
\label{def:qbc}
Let \(u\) be a quasiperiodic function with parent function \(U\) and
projection matrix
\(\bm{P}\in\mathbb{P}^{d\times n}\), and let
\(\Omega\subset\mathbb{R}^{d}\) be a bounded computational domain.
The QBC inherits the boundary value from the same parent function \(U\)
that generates the interior field. Using the physical-to-parent-space
correspondence \(\mathcal{C}\) defined in \eqref{eqn:qp_homo}, we define
the Dirichlet-type QBC by
\begin{equation}\label{eq:QBC_def}
    u(\bm{x})
    =
    U\bigl(\mathcal{C}(\bm{x})\bigr),
    \qquad \bm{x}\in\partial\Omega.
\end{equation}
\end{definition}

The QBC \eqref{eq:QBC_def} preserves the global quasiperiodic structure after domain
truncation rather than imposing boundary data independently of the
interior field. At the continuous level, \eqref{eq:QBC_def} is an exact
identity. In numerical implementation, the required parent function values are recovered from
finitely many interior samples by the FPR method, so the approximation arises only
from this recovery, as detailed in \Cref{sec:fpr_qbc}.

\Cref{fig:boundary1_on_torus} illustrates the Dirichlet-type QBC for the
one-dimensional domain \(\Omega=[-7\pi,6.25\pi]\). Domain truncation
creates two boundary points (red dots), whose phases remain on the same irrational
manifold in \(\mathbb{T}^{2}\). Their boundary values are therefore
determined by the parent function \(U\).

\begin{figure*}[!hbpt]
\centering
\subfigure[]{\includegraphics[width=4.5cm]{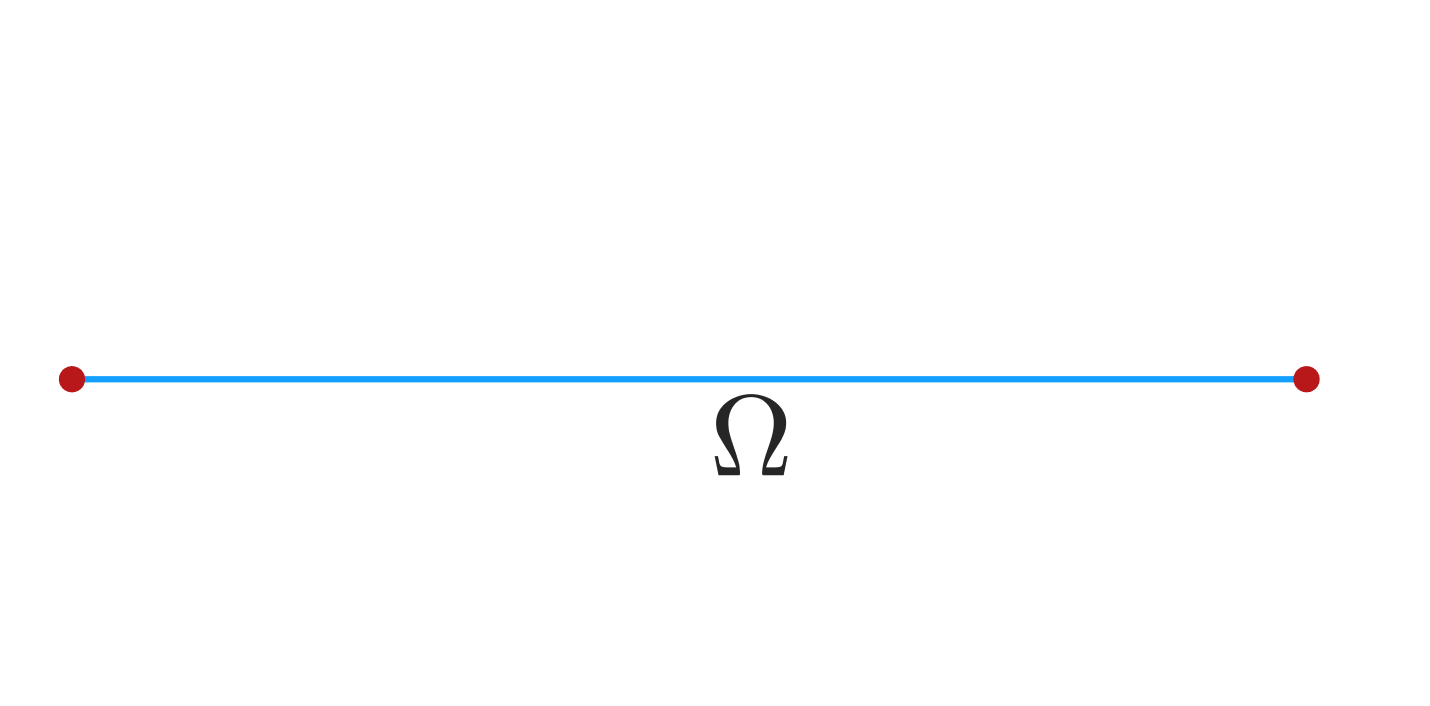}}
\hspace{5mm}
\subfigure[]{\includegraphics[width=4cm]{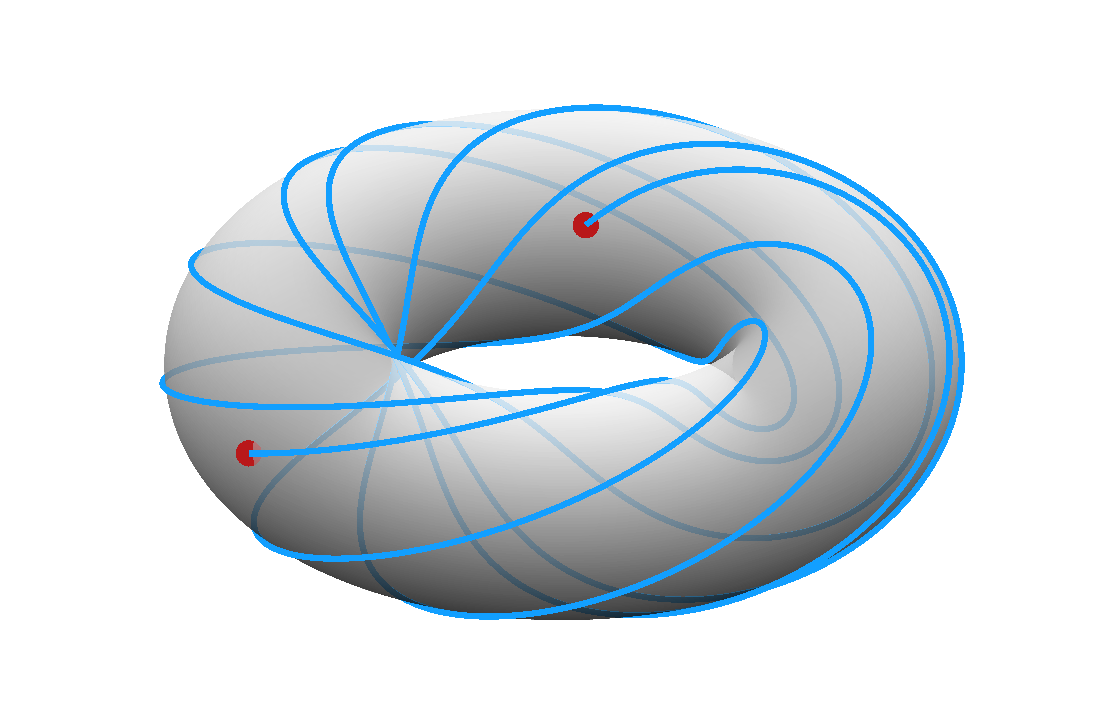}}
\caption{
 Illustration of the Dirichlet-type QBC for
\(\Omega=[-7\pi,6.25\pi]\) with
\(\bm{P}=(1,\sqrt{2})\):
\textup{(a)} the bounded domain and
\textup{(b)} its corresponding phases on
\(\mathbb{T}^{2}\).
}
\label{fig:boundary1_on_torus}
\end{figure*}



Building on the framework of the QBC, we establish a  computable modeling for QHJEs \eqref{eqn:QHJE} of the following form
 \begin{equation}\label{eqn:QHJE1}
\left\{
               \begin{aligned}
                        &u_t+H(x, u_x)=0, \quad\mathrm{in}~~ \Omega\times(0,\infty)\\
                        &u(x, 0)=u_0(x),\quad\mathrm{on}~~ \Omega\times\left\{t=0\right\}\\
                        & u(x, t) =U(\mathcal{C}(x), t), \quad \mathrm{on}~~\partial\Omega\times(0,\infty).
                \end{aligned}                               
                \right.
\end{equation}
Here, $H$ takes the form \eqref{eqn:keh} and satisfies assumptions \ref{itm:quasiperiodic}–\ref{itm:convex}, while Lipschitz continuity serves as a structural condition ensuring the uniqueness of the viscosity solution
  \begin{equation}\label{eqn:lipschitz}
    \left\{\begin{aligned}
        &|H(x, p)-H(y, p)|\leq C(1+|p|)|x-y|,\\
        &|H(x, p)-H(x, q)|\leq C|p-q|.
    \end{aligned}
    \right.
\end{equation}

\subsection{Approximations of the QBC}\label{sec:fpr_qbc}

In this subsection, we introduce the FPR method for implementing the QBC.
The method exploits the physical-to-parent-space homomorphism
\(\mathcal C\).
By \Cref{lemma:fpr_homo}, the image
\(\mathcal C(\mathbb R^d)\) is dense in \(\mathbb T^n\). Hence, for a target point \(\bm x^*\), one can select physical points
whose mapped phases are close to \(\mathcal C(\bm x^*)\) and recover
the corresponding parent-function value \(U(\mathcal C(\bm x^*))\) by local interpolation.
This avoids rational approximation of the irrational numbers and
therefore overcomes the traditional Diophantine error.
The procedure consists of the following two steps.

\begin{itemize}
    \item \textbf{Selection of interpolation nodes.}
    Choose a sufficiently large computational domain \(\Omega\) and
construct a physical grid \(\Omega_N\subset\Omega\). The mapped point set
\[
    \mathcal{C}(\Omega_N)
    :=
    \left\{\mathcal{C}(\bm{x}_j):\bm{x}_j\in\Omega_N\right\}
\]
is a subset of the irrational manifold $(\bm{P}^T\bm{x})/(2\pi\mathbb{Z}^n)$
and sufficiently samples \(\mathbb{T}^n\).
    For a target point \(\bm{x}^{*}\in\partial\Omega\), compute its phase
    \(\mathcal{C}(\bm{x}^{*})\) and select
    \(K\) grid points
    \[
        \Theta_K
        :=
        \left\{\bm{x}_{j_\ell}\right\}_{\ell=1}^{K}
        \subset\Omega_N
    \]
    whose mapped phases are close to
    \(\mathcal{C}(\bm{x}^{*})\) in \(\mathbb{T}^n\).

    \item \textbf{Interpolation recovery.}
    Construct the local  basis functions
    \(\{\phi_\ell\}_{\ell=1}^{K}\) from the selected mapped phases.
    The value at \(\bm{x}^{*}\) is then recovered by
    \begin{equation*}
        u(\bm{x}^{*})
        =
        U\bigl(\mathcal{C}(\bm{x}^{*})\bigr)
        \approx
        \mathcal{I}_{\mathrm{FPR}}u(\bm{x}^{*})
        :=
        \sum_{\ell=1}^{K}
        u\bigl(\bm{x}_{j_\ell}\bigr)
        \phi_\ell\bigl(\mathcal{C}(\bm{x}^{*})\bigr),
    \end{equation*}
    where \(\mathcal{I}_{\mathrm{FPR}}\) denotes the 
    FPR interpolation operator.
\end{itemize}

When degree-\(s\) Lagrange interpolation is employed, the corresponding FPR interpolation operator is denoted by $\mathcal{I}_{\mathrm{FPR}(s)}$. The following lemma provides its error estimate. 
\begin{lemma}[{\cite[Theorem 4.4]{jiang2024accurately}}]\label{lem:FPR_error}
    If $u\in{\rm QP}(\mathbb{R}^d)$ and its parent function $U\in H^{s+1}(\mathbb{T}^n)$, then the error of the $s$-th order FPR interpolation operator $\mathcal{I}_{\mathrm{FPR}(s)}$ satisfies 
    \begin{equation*}
        \|u-\mathcal{I}_{\mathrm{FPR}(s)}u\|_{\infty}\lesssim h^{s+1}\|U\|_{s+1},\quad s\geq 1.
    \end{equation*}
    \end{lemma}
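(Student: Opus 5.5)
The plan is to reduce the statement to a standard local polynomial interpolation estimate on the torus $\mathbb{T}^n$. The link between the physical and parent spaces is the identity $u(\bm x)=U(\mathcal C(\bm x))$ from \eqref{eqn:qp_homo}.

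\textbf{Step 1: rewrite the error in the parent space.} Fix a target point $\bm x^*$ and let $\bm\theta^*=\mathcal C(\bm x^*)$. Write $\bm\theta_\ell=\mathcal C(\bm x_{j_\ell})$ for the mapped phases of the selected nodes in $\Theta_K$. Since $u(\bm x_{j_\ell})=U(\bm\theta_\ell)$, we get
\[
u(\bm x^*)-\mathcal I_{\mathrm{FPR}(s)}u(\bm x^*)=U(\bm\theta^*)-\sum_{\ell=1}^K U(\bm\theta_\ell)\phi_\ell(\bm\theta^*).
\]
The right-hand side is exactly the error of degree-$s$ Lagrange interpolation of $U$ at the nodes $\{\bm\theta_\ell\}\subset\mathbb T^n$. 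So the problem becomes a statement about interpolating a periodic function from scattered points on the torus.

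\textbf{Step 2: locate the nodes near the target.} By \Cref{lemma:fpr_homo}, the orbit $\mathcal C(\mathbb R^d)$ is dense and uniformly distributed in $\mathbb T^n$. Hence, for $\Omega$ large enough, the mapped grid $\mathcal C(\Omega_N)$ meets every cell of a uniform partition of $\mathbb T^n$ with mesh size comparable to $h$. We choose $\Omega$ and $N$ so that each $h$-cell around $\bm\theta^*$ contains a node. We then select $K=\binom{n+s}{s}$ nodes in a patch of diameter $O(h)$ around $\bm\theta^*$ that are unisolvent for $\mathbb P_s$. We also require a shape-regularity condition: after rescaling the patch to unit size, the Lebesgue constant $\sum_\ell|\phi_\ell|$ is bounded independently of $h$. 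If needed, we perturb the node choice using density to achieve this.

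\textbf{Step 3: apply the Bramble--Hilbert argument.} Let $q$ be the averaged Taylor polynomial of $U$ of degree $s$ on a ball $B$ of radius $O(h)$ containing the patch. Interpolation reproduces polynomials of degree $s$, so
\[
|U(\bm\theta^*)-I U(\bm\theta^*)|\le (1+\Lambda)\,\|U-q\|_{L^\infty(B)}.
\]
Here $\Lambda$ is the Lebesgue constant from Step 2. The Bramble--Hilbert lemma, combined with scaling and the Sobolev embedding $H^{s+1}\hookrightarrow C$, bounds $\|U-q\|_{L^\infty(B)}$ by $h^{s+1-n/2}|U|_{H^{s+1}(B)}$. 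To reach the stated rate $h^{s+1}\|U\|_{s+1}$ without the $n/2$ loss, we have two options. One is to interpret the norm as a $C^{s+1}$-type (or $W^{s+1,\infty}$) norm, which is what the cited theorem effectively uses. The other is to use a global Sobolev embedding on $\mathbb T^n$ after a compactness argument. I would follow the cited source and work with bounded $(s+1)$-th derivatives, so that the pointwise Taylor remainder gives $h^{s+1}$ directly. Taking the supremum over $\bm x^*$ then yields the claim.

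\textbf{Main obstacle.} The delicate step is Step 2. We need quantitative control showing that, for a finite domain $\Omega_N$, the phases are $h$-dense with a uniformly well-conditioned local geometry. Unique ergodicity gives density only qualitatively. A rate of coverage in terms of $|\Omega|$ needs Diophantine properties of $\bm P$, which \Cref{lemma:fpr_homo} does not provide. I would therefore take $h$ to be defined as the fill distance of $\mathcal C(\Omega_N)$ in $\mathbb T^n$ and add unisolvency as an assumption on the node selection.
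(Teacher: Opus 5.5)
The paper gives no proof of this lemma; it is quoted from \cite[Theorem~4.4]{jiang2024accurately}, so there is no in-paper argument to compare with. Your Steps~1 and~3 are the natural argument and are correct as far as they go: you pull the error back to Lagrange interpolation of $U$ at the phases $\mathcal{C}(\bm x_{j_\ell})$, then use polynomial reproduction and Bramble--Hilbert. However, they do not prove the stated inequality. You end with $(1+\Lambda)h^{s+1}|U|_{W^{s+1,\infty}}$, which needs $U\in W^{s+1,\infty}(\mathbb{T}^n)$, and $H^{s+1}$ does not supply that.

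Your other route (``global Sobolev embedding after a compactness argument'') cannot close the gap, because the $h^{-n/2}$ loss you found is sharp when $h$ is the fill distance $\delta$ of $\mathcal{C}(\Omega_N)$. Take a node-free ball $B(\bm\theta_0,\delta)\subset\mathbb{T}^n$ and set $U(\bm\theta)=\psi((\bm\theta-\bm\theta_0)/\delta)$, with $0\le\psi\le1$, $\psi(0)=1$ and $\operatorname{supp}\psi$ inside the unit ball. Every node value vanishes, so $\mathcal{I}_{\mathrm{FPR}(s)}u\equiv0$. By density of $\mathcal{C}(\mathbb{R}^d)$, $\|u-\mathcal{I}_{\mathrm{FPR}(s)}u\|_\infty=1$, while $\delta^{s+1}\|U\|_{H^{s+1}}\lesssim\delta^{n/2}\to0$. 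So, with $\|\cdot\|_{s+1}$ read as the $H^{s+1}$ norm, the stated bound is false under your normalization. It also fails whenever $\delta\gtrsim h$, and for every choice of $\Omega$ if $s+1\le n/2$.

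What your argument actually proves is a different lemma: $\|u-\mathcal{I}_{\mathrm{FPR}(s)}u\|_\infty\lesssim(1+\Lambda)\delta^{s+1}|U|_{W^{s+1,\infty}(\mathbb{T}^n)}$, or $\lesssim(1+\Lambda)\delta^{s+1-n/2}|U|_{H^{s+1}(\mathbb{T}^n)}$ when $s+1>n/2$. State explicitly that you are proving this corrected version, rather than appealing to what the cited source ``effectively uses''. An $h^{s+1}\|U\|_{H^{s+1}}$ bound can hold only if the phase fill distance is made much smaller than the physical mesh, roughly $\delta^{s+1-n/2}\lesssim h^{s+1}$.

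The second gap is Step~2, which you assume rather than prove. Its appeal to \Cref{lemma:fpr_homo} is not direct, because density of the continuous orbit says nothing immediate about the image of the grid. For example, with $\bm P=(1,\sqrt2)$ and $h=2\pi/N$ (the first numerical test), $\mathcal{C}(\Omega_N)$ lies on $N$ parallel circles $\{y_1=\text{const}\}$ spaced $h$ apart, however large $\Omega$ is. So there is no room to ``perturb using density'' off these circles.

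What does hold is coverage at scale $h$. Since $\mathcal{C}$ is Lipschitz, each $\mathcal{C}(\bm x)$ with $\bm x\in\Omega$ lies within $O(h)$ of a mapped node, and by minimality and compactness $\mathcal{C}(\Omega)$ is $h$-dense once $\Omega$ is large enough. This needs at least $\sim h^{-n}$ nodes, i.e.\ $|\Omega|\gtrsim h^{d-n}$, so $\Omega$ must grow as $h\to0$, a dependence the lemma suppresses. Unisolvency with an $h$-independent Lebesgue constant must then be built from the actual node structure. For $n=2$, one way is to take $s+1$ adjacent circles carrying $s+1,s,\dots,1$ nodes at mutual spacing $\sim h$.

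Finally, the paper's FPR(1) on $\mathbb{T}^2$ uses four nodes ($\Theta_4$). This points to tensor-product-type interpolation with $K=(s+1)^n$ rather than $K=\binom{n+s}{s}$. Your Step~3 still applies because that space contains $\mathbb{P}_s$, but the node selection in Step~2 should be carried out for the operator actually used.
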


The FPR method performs computations within a low-dimensional space and
thus avoids dimensional lifting, overcoming the high computational cost
caused by the curse of dimensionality compared with the PM.
Moreover, since the FPR method relies on local interpolation recovery, it is naturally applicable to problems with low regularity. In the numerical
implementation of the QBC through the FPR method, the resulting  error is solely determined by the FPR interpolation error, which can be reduced by mesh refinement or higher-order interpolation.
\section{Discretization for QHJEs and convergence analysis}
\label{sec:numerical_methods}

In this section, we propose a new approach for QHJEs \eqref{eqn:QHJE1} that combines the SL scheme with FPR interpolation, referred to as the SL–FPR scheme. We also present the stability and convergence analysis of this fully discrete scheme.

\subsection{SL-FPR scheme}
The SL-FPR  scheme for the QHJE \eqref{eqn:QHJE1} consists of two main steps. The first step applies the SL scheme to discretize  \eqref{eqn:QHJE1} as 
\begin{equation}\label{eqn:sls_QHJE2}
     u^{m+1}_i=\min_{a\in\mathbb{R}}\left\{u^m(x_i-a\tau)+\tau(\frac{1}{k'}a^{k'}+f(x_i))\right\}.
 \end{equation}

The second step uses the FPR method to reconstruct the quasiperiodic characteristics foot $u^m(x_i-a\tau)$ within the QBCs framework. We rewrite the discrete scheme \eqref{eqn:sls_QHJE2} as
\begin{equation}\label{eqn:sls_fpr}
    \begin{aligned}
        u^{m+1}_i &= \min_{a\in\mathbb{R}}\left\{u^m(x_i-a\tau)+\tau \big(\frac{1}{k'}a^{k'}+f(x_i)\big)\right\}\\
        &\approx  \min_{a\in\mathbb{R}}\left\{\mathcal{I}_{\mathrm{FPR}(s)}[u^{m}](x_i-a\tau)+\tau \big(\frac{1}{k'}a^{k'}+f(x_i) \big)\right\}\\
        &= 
        \mathcal{I}_{\mathrm{FPR}(s)}[u^{m}]
        (x_i-\overline{a}_i^m\tau)
        +\tau(\frac{1}{k'}(\overline{a}_i^m)^{k'}
        +f(x_i)).
    \end{aligned}
\end{equation}
Here, $\mathcal{I}_{\mathrm{FPR}(s)}[u^{m}](x_i-\overline{a}^m_i\tau)$ denotes the recovery of the value $u^m(x_i-a\tau)$  using the FPR method, where $\overline{a}_i^m$ denotes a minimizer of the discrete objective function.   For convenience, we write $\mathcal{I}_{\mathrm{FPR}(s)}:=\mathcal{I}_s$ throughout the following analysis.

To more clearly introduce the basic idea of the SL–FPR scheme, we present an illustration of the SL-FPR(1) scheme as a representative  example. As shown in  \Cref{fig:sl_boundary1_on_torus}, the left panel illustrates the characteristic tracing and reconstruction procedures in the SL scheme. 
The red point at the time level \(t_{m+1}\) denotes the target value. Its SL update requires the solution value at the characteristic foot traced back to \(t_m\), which is reconstructed from the four orange points in \(\Theta_4\) using first-order FPR interpolation.
The right panel shows how the set $\Theta_4$  is determined via the homomorphic mapping $\mathcal{C} (\Omega_N) = (\bm{P}^T\Omega_N)/(2\pi\mathbb{Z}^2)$, which selects the orange points lying closest to the target point on $\mathbb{T}^n$.
\begin{figure*}[!htbp]
\centering
{
    \includegraphics[width=10cm]{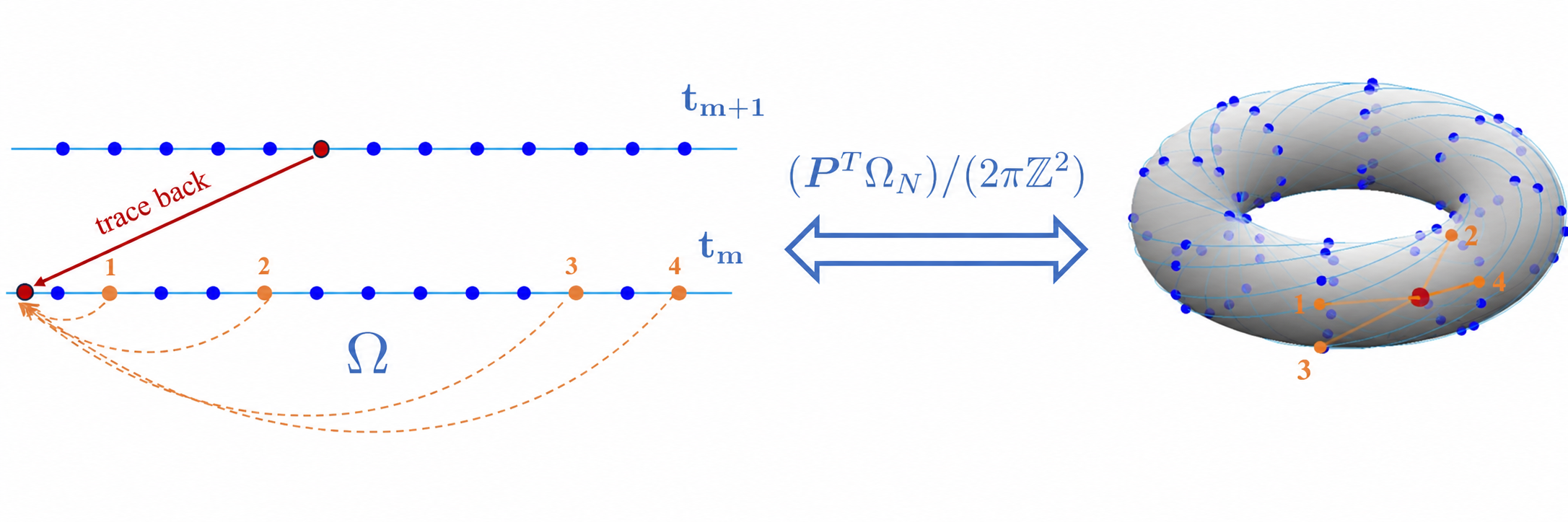}
}
\caption{Illustration of SL-FPR(1) scheme for recovering the foot of characteristic.}
\label{fig:sl_boundary1_on_torus}
\end{figure*}

\subsection{Stability}  Let us consider the  stability of the scheme. Since the minimizer $a$ varies the whole space $\mathbb{R}$ and may therefore be unbounded, this poses a potential issue in both analysis and numerical computations. Fortunately, the following lemma provides a method for identifying an equivalent compact set that contains the minimizer in the convex setting, thereby addressing this challenge.


\begin{lemma}[{\cite[Proposition 5.1]{corrias1995numerical}}]\label{lemma:compect_set}
     Let $H: \mathbb{R}\times\mathbb{R}\to\mathbb{R}$  be continuous and convex. Moreover, let $H$ satisfy \eqref{eqn:LT}. Then, there exists a bounded set $A\subset\mathbb{R}$ such that
     $$
     H(p) = \sup_{a\in\mathbb{R}}\left\{p\cdot a-L(a)\right\} = \sup_{a\in A}\left\{p\cdot a-L(a)\right\}.
     $$
\end{lemma}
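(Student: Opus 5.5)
The plan is to read the statement in the only form in which it can hold with a single bounded set: uniformly for $p$ in a bounded set $|p|\le R$. In the scheme this bound is the Lipschitz bound of the (numerical) solution. Uniformly for all $p\in\mathbb{R}$ it is false in general, since for the $k$-power Hamiltonian the maximizer $\bar a=\operatorname{sgn}(p)|p|^{1/(k'-1)}$ from \Cref{rem:min} grows with $|p|$. With $R$ fixed, the idea is to show that $a\mapsto p\cdot a-L(a)$ falls below its value at $a=0$ once $|a|$ is large. The bound comes from a lower estimate on $L$ obtained by testing the supremum defining $L$ at a single well-chosen momentum.

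\textbf{Step 1: $L$ is finite.} By superlinearity in \eqref{eqn:LT}, $H$ is bounded below, so $L(0)=-\inf_p H(p)$ is finite. The same argument applied to $q\mapsto q\cdot a-H(q)$, which tends to $-\infty$ as $|q|\to\infty$, gives $L(a)<\infty$ for every $a$. For $x$-dependent $H$, the $\mathbf{BUC}$ assumption on $\mathbb{R}\times B(0,\rho)$ makes these bounds uniform in $x$. For the concrete Hamiltonian \eqref{eqn:keh}, $f$ is bounded, being a continuous function of a point of the compact torus.

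\textbf{Step 2: linear lower bound on $L$.} Let
\[
C_R:=\sup_{x}\max_{|q|\le R+1}H(x,q)<\infty .
\]
Choosing $q=(R+1)\operatorname{sgn}(a)$ in the definition of $L$ gives
\[
L(x,a)\ge (R+1)|a|-C_R .
\]

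\textbf{Step 3: localisation.} For $|p|\le R$,
\[
p\cdot a-L(x,a)\le R|a|-(R+1)|a|+C_R=-|a|+C_R .
\]
This is strictly smaller than $p\cdot 0-L(x,0)=-L(x,0)$ as soon as $|a|>C_R+\sup_xL(x,0)=:r$. Therefore the supremum over $\mathbb{R}$ equals the supremum over $A=[-r,r]$. Since $L$ is convex and finite, it is continuous, so this supremum over $A$ is in fact a maximum. Combining with $H=H^{**}$ from \Cref{def:LT} yields the identity in the statement.

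\textbf{Main obstacle.} The delicate point is Step 1 and the choice of $R$, not the estimates themselves. One must check that $L$ is finite, so that the comparison value $-L(0)$ is meaningful, and that all constants are uniform in $x$. One must also justify that the $p$'s actually used (discrete difference quotients of the numerical solution) stay in a fixed ball. I would obtain that ball from the Lipschitz bound on $u_0$ propagated by the scheme, and simply state the lemma for $|p|\le R$.
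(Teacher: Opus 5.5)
The paper gives no proof of this lemma. It quotes \cite[Proposition~5.1]{corrias1995numerical}, so there is no internal argument to compare against, and your proof is a correct, self-contained replacement for the citation. The test with the single momentum $q=(R+1)\operatorname{sgn}(a)$ gives $L(a)\ge (R+1)|a|-C_R$. Hence $p\cdot a-L(a)\le -|a|+C_R$ for $|p|\le R$, and comparing with $a=0$ localizes the supremum to $[-r,r]$.

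Your reading of the statement is also right. With one bounded $A$ for all $p\in\mathbb{R}$ it is false: for $H=p^2/2$ one has $\sup_{|a|\le r}\{pa-a^2/2\}=rp-r^2/2<p^2/2$ whenever $p>r$. So $A$ must depend on a bound $R$ for $|p|$. This matters for the paper. In its stability proof the constant $C_a$ then depends on a Lipschitz bound for the iterates $u^m$. That is exactly the issue you flag as the main obstacle, and the paper never establishes such a bound.

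One correction to Step 1. For $x$-dependent $H$, the $\mathbf{BUC}$ assumption on $\mathbb{R}\times B(0,\rho)$ does not by itself make $\sup_x L(x,0)=-\inf_{x,p}H(x,p)$ finite. For example, $H(x,p)=p^2/(1+x^2)-p$ satisfies \eqref{eqn:LT}, yet $\inf_p H(x,p)=-(1+x^2)/4$. So your $r=C_R+\sup_x L(x,0)$ can be infinite. There are two ways out:
\begin{itemize}
\item assume superlinearity uniform in $x$, which holds for \eqref{eqn:keh} because $f$ is bounded; or
\item avoid $L(x,0)$ entirely, using only $\mathbf{BUC}$, $H=H^{**}$, and your Step 2 bound.
\end{itemize}
For the second route, note that a maximizer $a^*$ of $a\mapsto p a-L(x,a)$ exists, since this function is continuous and tends to $-\infty$. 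It then satisfies
\[
-|a^*|+C_R\ \ge\ p a^*-L(x,a^*)\ =\ H(x,p)\ \ge\ -\sup_{\mathbb{R}\times B(0,R)}|H| .
\]
This bounds $|a^*|$ uniformly in $x$.
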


Based on this, we establish the stability of the SL-FPR$(s)$ scheme. 
\begin{theorem}
    The solution to the QHJE \eqref{eqn:QHJE1} under the SL-FPR$(s)$ scheme is stable with respect to the quasiperiodic potential term $f$ and initial condition $u_0$.

    \begin{proof} According to \Cref{lemma:compect_set}, there exists a uniform constant $C_a>0$ such that 
\[
|\overline{a}_j| \le C_a \quad \text{for all } ~~1\leq j\leq m
\]
in the scheme \eqref{eqn:sls_QHJE}. Here, $\overline{a}_j$ denotes the minimizer of the SL scheme at each time level $j$. Thus, we have
    \begin{equation}
    \begin{aligned}
\|u^{m}\|_{\infty}&=\Big\|\min_{a\in\mathbb{R}}\left\{u^{m-1}(x-a\tau)+\tau(\frac{1}{k'}a^{k'}+f(x))\right\}\Big\|_{\infty}\\
&=\|u^{m-1}(x-\overline{a}_m\tau)+\tau(\frac{1}{k'}\overline{a}_m^{k'}+f(x))\|_{\infty}\\
&\leq \|u^{m-1}\|_\infty+\tau(\frac{1}{k'}\overline{a}_m^{k'}+\|f\|_{\infty})\\
&\leq\|u^{m-2}\|_{\infty}+\tau(\frac{1}{k'}(\overline{a}_m^{k'}+\overline{a}_{m-1}^{k'})+2\|f\|_{\infty})\\
&\leq\cdots\\
&\leq\|u_0\|_\infty+m\tau\left(\frac{C_a^{k'}}{k'}+\|f\|_\infty\right).
\end{aligned}
\end{equation}
    \end{proof}
\end{theorem}

\textbf{Convergence analysis.}~~For the exact solution \(u\), define the one-step SL  approximation at
\((x_i,t_{m+1})\) by
\begin{equation}
\label{eqn:SL_operator}
\mathcal S[u(\cdot,t_{m})](x_i)
:=
\min_{a\in\mathbb R}
\left\{
u(x_i-a\tau,t_m)+\tau L(x_i,a)
\right\},
\end{equation}
and the numerical solution as
\begin{equation}
\label{eqn:FPR_solution}
u_i^{m+1}
:=
\min_{b\in\mathbb R}
\left\{
\mathcal I_s[u^m](x_i-b\tau)
+\tau L(x_i,b)
\right\},
\end{equation}
where \(a\) and \(b\) denote the optimization variables of the SL and SL--FPR
objective functions, respectively. Moreover, we denote by $
\mathcal S^{m+1}[u](x_i)$
the SL approximation at \(t_{m+1}\) obtained after \(m+1\) successive time steps. Then we  give the  error estimate of SL-FPR$(s)$ scheme.



\begin{theorem}\label{thm:convergence}
    Assume that the exact solution $u$ to the QHJE \eqref{eqn:QHJE1} has a parent function $U\in H^{s+1}(\mathbb{T}^n)$. Then, the error estimate of the SL-FPR$(s)$ scheme satisfies
    \begin{equation}
        \|u(x_i, t_{m+1}) - u^{m+1}_i\|_\infty\leq\mathcal{O}(h^{s+1}) + \mathcal{O}(\tau), \quad s\geq 1.
    \end{equation}
    \begin{proof}
     To quantify the spatial error, we consider the estimate
    $$
    | \mathcal{S}[u(\cdot,t_{m})](x_i) - u^{m+1}_i| = |\tau L(\overline{a}) + u^m(x_i-\overline{a}\tau)-(\tau L(\overline{b})+\mathcal{I}_su^m(x_i-\overline{b}\tau))|.
    $$
    To simplify notation and clarify the structure, we define
$$
\Phi(a):=\tau\,L(x_i,a)+u(x_i-a\tau,t_m),\qquad
\Psi(b):=\tau\,L(x_i,b)+\mathcal I_s u^m(x_i-b\tau).
$$
Using these definitions, the SL approximation  and the SL-FPR solution can be expressed as
$$ \mathcal{S}[u(\cdot,t_{m})](x_i)=\min\limits_{a\in\mathbb{R}}\Phi(a),\quad u^{m+1}_i=\min\limits_{b\in\mathbb{R}}\Psi(b).$$
Choose $\bar a$ (resp.\ $\bar b$) realizing (or approximating) the minimum of $\Phi$ (resp.\ \(\Psi\)), and set 
 $$y_{\bar a}=x_i-\bar a\tau,\quad y_{\bar b}=x_i-\bar b\tau.$$ Then
$$
\begin{aligned}
 \mathcal{S}[u(\cdot,t_{m})](x_i) - u^{m+1}_i
&= \Phi(\bar a) - u^{m+1}_i
\ge \Phi(\bar a)-\Psi(\bar a)
= u(y_{\bar a},t_m)-\mathcal I_s u(y_{\bar a},t_m),
\end{aligned}
$$
since \(u^{m+1}_i=\min\limits_{a\in\mathbb{R}}\Psi(a)\le\Psi(\bar a)\). Similarly,
\[
\begin{aligned}
 \mathcal{S}[u(\cdot,t_{m})](x_i) - u^{m+1}_i
&\le \Phi(\bar b)-\Psi(\bar b)
= u(y_{\bar b},t_m)-\mathcal I_s u(y_{\bar b},t_m),
\end{aligned}
\]
because \( \mathcal{S}[u(\cdot,t_{m})](x_i)=\min\limits_{b\in\mathbb{R}}\Phi(b)\le\Phi(\bar b)\). Combining the two inequalities yields
   \begin{equation}
\begin{aligned}
\| \mathcal{S}[u(\cdot,t_{m})](x_i)-u_i^{m+1}\|_\infty
&\leq
\max\left\{
\begin{array}{l}
\left|u^m(x_i-\bar a\tau)
-\mathcal{I}_s u^m(x_i-\bar a\tau)\right|,\\[2mm]
\left|u^m(x_i-\bar b\tau)
-\mathcal{I}_s u^m(x_i-\bar b\tau)\right|
\end{array}
\right\} \\
&\leq \|u-\mathcal{I}_s u\|_\infty
\leq C h^{s+1}.
\end{aligned}
\end{equation}

Furthermore, when the characteristic curves of \eqref{eqn:QHJE1} are not straight, 
approximating them by straight lines in the SL schemes
introduces a temporal discretization error. 
In our problem, the true foot of the characteristic lies on a curve, 
whereas the discrete update traces back along a straight line of length $a\tau$. 
This geometric mismatch produces a local truncation error of order $\mathcal{O}(\tau^2)$, 
which accumulates to a global temporal error of order $\mathcal{O}(\tau)$.

More precisely, for the exact solution $u$ of \eqref{eqn:QHJE1} and any fixed $a\in\mathbb{R}$, 
a Taylor expansion in $x$ gives
\[
u(x-a\tau,t)
= u(x,t) - \tau\, a\cdot u_x(x,t) + \mathcal{O}(\tau^2).
\]
Substituting into the  SL scheme update formula
\[
\mathcal S[u(\cdot,t)](x) =  \min_{a\in\mathbb{R}}\big\{u(x-a\tau,t) + \tau\,L(x,a)\big\},
\]
the leading $a$-dependent term in the minimization is 
$L(x,a) - a\cdot D u(x,t)$, 
which attains its minimum at $\overline{a}$ satisfying $$\frac{\partial L(x, {a})}{\partial a} = \frac{\partial u(x,t)}{\partial x}.$$  
By the Legendre transform, $L(x, \overline{a}) - \overline{a}\cdot u_x(x,t) = -H\big(x,u_x\big)$, 
so
\[
\mathcal S[u(\cdot,t)](x)
= u(x,t) - \tau\, H\big(x,u_x\big) + \mathcal{O}(\tau^2).
\]
On the other hand, performing a Taylor expansion of the exact solution \(u(x, t+\tau)\) at time \(t\) yields

\[
u(x,t+\tau) 
= u(x,t) + \tau\,u_t(x,t) + \mathcal{O}(\tau^2)
= u(x,t) - \tau\, H\big(x, u_x(x,t)\big) + \mathcal{O}(\tau^2),
\]
where we used $u_t = -H(x,u_x)$. 
Subtracting these expressions, the local truncation error of the SL time discretization is
\[
u(x,t+\tau) - \mathcal S[u(\cdot,t)](x) = \mathcal{O}(\tau^2).
\]

Using the stability of the SL scheme, accumulation of the local
truncation errors over a fixed time interval yields
\[
\begin{aligned}
\left\|
u(\cdot,t_{m+1})
-
\mathcal S^{m+1}[u](\cdot)
\right\|_\infty
&\leq
\sum_{j=0}^{m}
\left\|
u(\cdot,t_{j+1})
-
\mathcal S[u(\cdot,t_j)](\cdot)
\right\|_\infty
\\
&\leq
C(m+1)\tau^2
=
\mathcal O(\tau).
\end{aligned}
\]


In conclusion, by the triangle inequality, the total error of the
SL--FPR(\(s\)) scheme satisfies
\begin{align*}
\left\|
u(x_i,t_{m+1})-u_i^{m+1}
\right\|_\infty
&\leq
\left\|
u(x_i,t_{m+1})
-\mathcal S^{m+1}[u](x_i)
\right\|_\infty
\\
&\quad+
\left\|
\mathcal S^{m+1}[u](x_i)
-u_i^{m+1}
\right\|_\infty
\\
&\leq
\mathcal{O}(\tau)
+
\sum_{j=0}^{m}
\left\|
\mathcal{S}[u(\cdot,t_{j})](x_i)-u_i^{j+1}
\right\|_\infty
\\
&=
\mathcal{O}(\tau)
+
\mathcal{O}(h^{s+1}).
\end{align*}


\end{proof}
    
\end{theorem}

\section{Application to quasiperiodic homogenization} \label{sec:quasi_hom}

In this section, we apply the SL--FPR framework to compute the effective Hamiltonians in quasiperiodic homogenization. As
introduced in \Cref{sec:introduction}, we consider the corrector problem 
\begin{equation}
H(x,p+v')
:=
\frac{1}{k}|p+v'|^k-f(x)
=
\overline H(p),
\qquad x\in\mathbb{R}.
\label{eq:corrector_problem}
\end{equation}  
associated with the \(k\)-power convex Hamiltonian
\[
H(x,p)=\frac{|p|^k}{k}-f(x),
\qquad k>1,
\]
where \(f(x)\) is quasiperiodic and
\(\overline H(p)\) denotes the effective Hamiltonian.
For mechanical Hamiltonians, the exact corrector problem has been
established in the quadratic case \(k=2\)
\cite{hu2024polynomial}. Here, we first extend this result to the general case $k>1$.

\begin{theorem}\label{thm:keh}
Let the Hamiltonian be
\[
H(x, p) = \frac{|p|^k}{k} - f(x),\quad k>1
\]
with $f(x) = F(\mathcal{C}(x))$ and $\mathcal{C}(x) = (\bm{P}^T\bm{x})/(2\pi\mathbb{Z}^n)$. Assume that
 $\min\limits_{\bm{y}\in\mathbb{T}^n} F(\bm{y}) = 0$.
Then  $\overline{H}(p)$ satisfies 

\begin{enumerate}
    \item[(i)] Define
    \[
    p_0 = \int_{\mathbb{T}^n} \left( k F(\bm{y}) \right)^{1/k}\,d\bm{y} = \ave{(k F(\bm{y}))^{1/k}}.
    \]
    Then $\overline{H}(p) = 0$ for $|p| \leq p_0$, and for $|p| \geq p_0$,  $\overline{H}(p)$ is determined by
    \[
    |p| = \int_{\mathbb{T}^n} \left( k (\overline{H}(p) + F(\bm{y}) \right)^{1/k}  d\bm{y}.
    \]

    \item[(ii)] If $|p| \geq p_0$, then there exists a unique (up to an additive constant) sublinear corrector $v \in C^1(\mathbb{R})$ satisfying the cell problem
    \[
    \frac{1}{k} |p + v'(x)|^k - f(x) = \overline{H}(p), \quad x\in\mathbb{R}.
    \]
\end{enumerate}
\end{theorem}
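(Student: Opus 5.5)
The plan is to follow the quadratic-case argument of \cite{hu2024polynomial}. For $k>1$ the map $q\mapsto|q|^k/k$ is still strictly convex, even, and invertible on $[0,\infty)$, so the same steps should go through. Throughout, use the characterization of $\overline H(p)$ from the introduction: it is the unique constant for which the $\delta$-approximate corrector problem is solvable for every $\delta>0$. By uniqueness, exhibiting such approximate correctors for a candidate constant $c$ identifies $\overline H(p)=c$. Before starting, note two facts. First, $\overline H$ is even, by the symmetry $x\mapsto -x$ together with $p\mapsto-p$. Second, $\overline H(p)\ge \max_x H(x,0)\cdot 0$ fails in general, so I will use instead that $\overline H(p)\ge -\min f=0$ (lower bound) and work with $p\ge0$.

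\textbf{Step 1: the regime $|p|\ge p_0$, which proves (i) there and (ii).}
Define $G(c)=\ave{(k(c+F))^{1/k}}$ for $c\ge0$. This map is continuous and strictly increasing, satisfies $G(0)=p_0$, and $G(c)\to\infty$ as $c\to\infty$. Hence for each $p\ge p_0$ there is a unique $c=c(p)\ge0$ with $G(c)=p$. Now set
\[
v'(x)=(k(c+f(x)))^{1/k}-p.
\]
Then $p+v'\ge0$ and $\tfrac1k|p+v'|^k-f=c$ holds pointwise, and $v'$ is continuous, so $v\in C^1$.

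Sublinearity of $v$ means $\tfrac1x\int_0^x v'\to0$. This is where \Cref{lemma:fpr_homo} enters. Unique ergodicity of the linear flow $\mathcal C(x)$ gives uniform convergence of the Birkhoff averages of the continuous function $(k(c+F))^{1/k}$ to its torus average, which equals $p$. So $v(x)=o(|x|)$. An exact sublinear corrector is in particular a $\delta$-approximate corrector for every $\delta$, so $\overline H(p)=c(p)$. For $p\le -p_0$, take $p+v'=-(k(c+f))^{1/k}$.

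For uniqueness in (ii), any $C^1$ solution satisfies $|p+v'|=(k(c+f))^{1/k}$. When $c>0$ this quantity is bounded away from zero, so by continuity $p+v'$ has constant sign. Sublinearity then forces the sign of $p$. At $c=0$, $p=p_0$, a sign change of $p+v'$ could only occur at zeros of $f$. Here I would argue via averaging: a sublinear $v$ forces $\lim\tfrac1x\int_0^x(p+v')=p_0$, which equals the average of $|p+v'|$, so $p+v'\ge0$ almost everywhere along long windows. Continuity then gives $p+v'\ge0$ everywhere.

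\textbf{Step 2: the regime $|p|<p_0$, where $\overline H(p)=0$.}
The lower bound $\overline H(p)\ge 0$ comes from any $\delta$-approximate subsolution inequality. Since $\overline H+\delta\ge H(x,p+v')\ge -f(x)$ at points where $f$ is nearly $0$, and such points exist by density (\Cref{lemma:fpr_homo}), we get $\overline H\ge-\delta$ for all $\delta$. For the upper bound, convexity of $\overline H$ together with evenness and $\overline H(\pm p_0)=0$ gives $\overline H(p)\le0$ on $[-p_0,p_0]$. Convexity of $\overline H$ is standard from convexity of $H$ in $p$: averaging the $\delta$-correctors of two slopes gives a $\delta$-subsolution for the averaged slope, and one uses the comparison or inf-sup characterization. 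I expect to cite this from \cite{tran2021hamilton,ishii2000almost}.

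\textbf{Main obstacle.}
The delicate point is the rigorous use of the $\delta$-approximate characterization. In particular, convexity of $\overline H$ and the lower bound must be justified in the quasiperiodic, noncompact setting, where inf-sup formulas over sublinear test functions replace the periodic cell-problem arguments. The uniqueness claim at the endpoint $c=0$ is also delicate. If the averaging argument there proves too weak, I would restrict (ii)'s uniqueness to $|p|>p_0$, or argue via minimality that zeros of $f$ are isolated only in a measure sense.
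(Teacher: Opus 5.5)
Most of your proposal is sound, and it proves more than the paper does. The paper's proof of (ii) uses exactly your construction: $v'=(k(\mu+f))^{1/k}-p$, with sublinearity from the ergodic theorem. However, it takes (i) for granted by analogy with the periodic $k=2$ computation in \cite{tran2021hamilton}, and it gives no argument for uniqueness. You instead get $\overline H(p)=c(p)$ for $|p|\ge p_0$ from the construction itself, via uniqueness of the constant in the corrector problem. You get the flat part from the lower bound $\overline H\ge-\inf f=0$, convexity, and $\overline H(\pm p_0)=0$. Together these give a genuine proof of (i).

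Make explicit that this uniqueness of the constant holds only within sublinear correctors. Without that restriction, $v'=(k(c+f))^{1/k}-p$ is an exact corrector for every $c\ge0$. Your uniqueness argument for $|p|>p_0$, where $c>0$, is correct.

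\textbf{The gap: uniqueness at the endpoint $|p|=p_0$ (so $c=0$).} Set $g:=(kf)^{1/k}-(p_0+v')\ge0$. Sublinearity gives $\frac1x\int_0^x g\to0$. Since $g=2(kf)^{1/k}$ on $\{p_0+v'<0\}$ and $g=0$ elsewhere, this only says $\int_0^x (kf)^{1/k}\,\mathbf 1_{\{p_0+v'<0\}}=o(x)$. That allows a bounded region where $p_0+v'<0$, and continuity does not rule it out.

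In fact uniqueness fails in the stated class of sublinear $C^1$ correctors. Take $F(\bm y)=(1-\cos y_1)(1-\cos y_2)$ and $\bm P=(1,\sqrt2)$, so that $f(x)=(1-\cos x)(1-\cos\sqrt2\,x)$ vanishes at $x=0$ and $x=2\pi$. Replace $(kf)^{1/k}$ by $-(kf)^{1/k}$ on $(0,2\pi)$.
\begin{itemize}
\item Then $p_0+\tilde v'$ stays continuous, so $\tilde v\in C^1$ solves the cell problem exactly.
\item $\tilde v-v$ is constant on $(-\infty,0]$ and on $[2\pi,\infty)$, and strictly decreasing on $[0,2\pi]$.
\item Hence $\tilde v-v$ is bounded but not constant, so $\tilde v$ is a second sublinear corrector.
\end{itemize}
Your alternative via minimality (zeros of $f$ isolated ``in a measure sense'') cannot repair this.

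So at $|p|=p_0$ you must choose one of the following fixes:
\begin{itemize}
\item restrict the uniqueness claim to $|p|>p_0$;
\item assume $f$ vanishes at most once on $\mathbb R$, in which case $p_0+v'$ has constant sign on each half-line and sublinearity forces it to be positive;
\item require $v'$ to be almost periodic. Then your averaging argument does close, because $g$ is a nonnegative almost periodic function with zero mean, hence $g\equiv0$.
\end{itemize}
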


\begin{proof}
For (i), we extend the periodic result for \(k=2\)
in \cite[Example~4.1]{tran2021hamilton} to the quasiperiodic setting with
general \(k>1\). We next prove~(ii). Since \(\overline{H}\) is even, it suffices to consider \(p\geq p_0\). Set $\mu=\overline H(p)\geq0$
and define
\[
    X_p(t)
    =
    \int_0^t
    \bigl(k(\mu+F(\mathcal{C}(x)))\bigr)^{1/k}\,dx,
    \qquad t\in\mathbb{R}.
\]
Since
\(x\mapsto(\mu+F(\mathcal{C}(x)))^{1/k}\) is quasiperiodic,
the parameterized ergodic theorem
\cite[Proposition~4.2]{jiang2024numerical} gives
\begin{equation}\label{eqn:appendix_birkhoff}
    \lim_{t \to \infty} \frac{X_p(t)}{t} = \lim_{t \to \infty} \frac{1}{t} \int_0^t \bigl(k(\mu+F(\mathcal{C}(x)))\bigr)^{1/k} \, dx =  \int_{\mathbb{T}^n} \bigl( k (\mu + F(\bm{y}) \bigr)^{1/k}  d\bm{y} = p.
\end{equation}
Then we can construct a quasiperiodic function
\[
v'(x) = \bigl(k(\mu+F(\mathcal{C}(x)))\bigr)^{1/k} - p,
\]
whose antiderivative is a sublinear  corrector $v$.
Now compute
\[
\frac{1}{k} |p + v'(x)|^k = \frac{1}{k} \left| \bigl(k(\mu+F(\mathcal{C}(x)))\bigr)^{1/k} \right|^k = \mu + F(\mathcal{C}(x)) = \mu + f(x),
\]
since $f(x) = F(\mathcal{C}(x))$. Therefore,
\begin{equation} 
    \frac{1}{k} |p + v'(x)|^k - f(x) = \mu = \overline{H}(p),
\end{equation}
which proves that $v$ is an exact corrector for the cell problem. 
\end{proof}

Based on \Cref{thm:keh}, we next compute  $\overline{H}$ through the  corrector equation
\begin{equation}\label{eqn:corrector_aim_1}
    H(x, p+v'):= \frac{1}{k}|p+v'|^k-f(x) = \overline{H}(p),~~x\in\mathbb{R}.
\end{equation}
Our numerical treatment is motivated by the long-time formula in
\Cref{lemma:long-time}; see also
\cite[Theorem~5.17]{tran2021hamilton}. It determines
\(\overline H(p)\) from the asymptotic behavior of an associated
time-dependent problem.
 \begin{lemma}\label{lemma:long-time}
     Suppose that $H$ is quasiperiodic and admits a Legendre transform.  Fix $p \in \mathbb{R}$ and consider the Cauchy problem
     \begin{equation}\label{eqn:long-time}
         \left\{\begin{aligned}
             &w_t +H(x,p+w') =0, ~~~~\mathrm{in}~~ \mathbb{R}\times(0, \infty),\\
             &w(x, 0) = w_0(x), \quad\quad \mathrm{on}\quad\mathbb{R}\times\left\{t = 0\right\}.
         \end{aligned}
         \right.
     \end{equation}
 Let $w(x, t)$ be the unique viscosity solution to this equation. Then,
 $$
\lim_{t\to\infty}-\frac{w(x, t)}{t}=\overline{H}(p).
$$
 \end{lemma}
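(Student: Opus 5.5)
We will prove \Cref{lemma:long-time} by comparing $w$ with explicit sub- and supersolutions built from approximate correctors, and then using the comparison principle for \eqref{eqn:long-time}. That principle holds under the Lipschitz structure \eqref{eqn:lipschitz} together with the uniform continuity of $H$ in \eqref{eqn:LT}. As the initial datum we take $w_0\in\mathbf{BUC}(\mathbb R)\cap\mathbf{Lip}(\mathbb R)$.

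\textbf{Step 1: quasiperiodic approximate correctors.} Fix $\delta>0$. By the definition of $\overline H(p)$ recalled in \Cref{sec:introduction}, there is a viscosity solution $v_\delta$ of
\[
\overline H(p)-\delta\le H(x,p+v_\delta')\le \overline H(p)+\delta .
\]
We need $v_\delta$ to be bounded (or at least sublinear) on $\mathbb R$. To get this, we construct $v_\delta$ via the discounted problem
\[
\lambda v^\lambda+H(x,p+(v^\lambda)')=0 .
\]
Coercivity gives $\|(v^\lambda)'\|_\infty\le C$, and the bound $|\lambda v^\lambda|\le C$ follows from comparison with constants. The key quasiperiodic input, following \cite{ishii2000almost}, is that $v^\lambda$ is itself quasiperiodic, because uniqueness commutes with almost-period translations. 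Combining this with the unique ergodicity and minimality of \Cref{lemma:fpr_homo}, we obtain
\[
\operatorname{osc}_{\mathbb R}\bigl(\lambda v^\lambda\bigr)\to0,\qquad -\lambda v^\lambda\to\overline H(p)\ \text{uniformly}.
\]
Then, for $\lambda$ small, the function $v_\delta:=v^\lambda-\min v^\lambda$ is an approximate corrector, and it is bounded by $C_\delta=\operatorname{osc}v^\lambda$, a quantity that may grow as $\delta\to0$ but is finite for each fixed $\delta$.

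\textbf{Step 2: barriers and comparison.} Set
\[
w^\pm(x,t)=v_\delta(x)-(\overline H(p)\mp\delta)t\pm\bigl(\|w_0\|_\infty+C_\delta\bigr).
\]
First, $w^+$ is a supersolution:
\[
w^+_t+H(x,p+w^+_x)\ge-(\overline H(p)+\delta)+\overline H(p)-\delta\cdot(-1)\ldots
\]
More precisely, $w^+_t+H(x,p+w^+_x)\ge -\overline H(p)-\delta+\overline H(p)-\delta$. This does not have the right sign, so the signs must be arranged differently: we use the time drift $-(\overline H(p)-\delta)t$ for the supersolution and $-(\overline H(p)+\delta)t$ for the subsolution. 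With this choice we get
\[
w^+_t+H\ge(-\overline H(p)+\delta)+(\overline H(p)-\delta)=0,
\]
and symmetrically $w^-$ is a subsolution. The additive constants ensure $w^-(\cdot,0)\le w_0\le w^+(\cdot,0)$.

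The comparison principle on $\mathbb R\times(0,\infty)$ then gives $w^-\le w\le w^+$. Here all functions are bounded on finite time strips, so the standard whole-space comparison for bounded uniformly continuous solutions applies. Dividing by $t$ and letting $t\to\infty$ yields
\[
\overline H(p)-\delta\le\liminf_{t\to\infty}\Bigl(-\frac{w}{t}\Bigr)\le\limsup_{t\to\infty}\Bigl(-\frac{w}{t}\Bigr)\le\overline H(p)+\delta .
\]
The limit is uniform in $x$. Since $\delta$ is arbitrary, the claim follows.

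\textbf{Main obstacle.} The hard part is Step 1: producing approximate correctors that are bounded on the whole line. In the periodic case this is automatic from compactness of the cell. In the quasiperiodic case, an exact corrector may fail to exist \cite{lions2003correctors}, and the oscillation of $v^\lambda$ can blow up as $\lambda\to0$. Only $\lambda\operatorname{osc}v^\lambda\to0$ is available, and it rests on the quasiperiodicity of $v^\lambda$ together with the ergodic and minimality properties of $\mathcal C$. I would first try the argument of \cite{ishii2000almost}. As a fallback, one only needs sublinear $v_\delta$, and then a finite-speed-of-propagation localization handles the growth of $v_\delta$.

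For the $k$-power Hamiltonian with $|p|\ge p_0$, Step 1 can be bypassed entirely: the exact $C^1$ sublinear corrector of \Cref{thm:keh}(ii) may be used directly, with $\delta=0$ and a localization in $x$ to absorb the sublinear growth.
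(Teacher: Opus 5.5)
The paper gives no proof of this lemma; it imports it from \cite[Theorem~5.17]{tran2021hamilton} (cf.\ \cite{giga2021existence}). Your proposal is therefore a genuinely different, self-contained route. It is the standard argument, and it is correct: discounted problems yield bounded quasiperiodic $\delta$-approximate correctors, and comparison traps $w$ between $v_\delta-(\overline H(p)\mp\delta)t\pm M$.

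Compared with the bare citation, your argument isolates the one place where quasiperiodicity matters: bounded exact correctors need not exist, so one needs $\operatorname{osc}(\lambda v^\lambda)\to0$. It also exposes two hypotheses the lemma leaves implicit. First, $w_0$ must be bounded; for $w_0(x)=qx$ the limit is $\overline H(p+q)$. Second, $\overline H(p)$ must be defined via sublinear approximate correctors. Indeed, for $H=|p|^k/k-f$, every $c\ge-\inf f$ admits the exact, generally non-sublinear, corrector $p+v'=(k(c+f))^{1/k}$. Your Step~2 is exactly what proves uniqueness of the constant among bounded correctors.

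Two places need tightening.

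In Step~1, quasiperiodicity of $v^\lambda$ together with $\|(\lambda v^\lambda)'\|_\infty\le C\lambda$ does not give $\operatorname{osc}(\lambda v^\lambda)\to0$. For a counterexample, take $\bm P=(1,\sqrt2)$ and $\cos((m+n\sqrt2)x)$ with $0<|m+n\sqrt2|\le C\lambda$. The real input is a $\lambda$-uniform phase estimate. Let $v^\lambda_\theta$ solve the discounted problem with potential $F(\theta+\bm P^{T}x)$, and let $\omega_F$ be the modulus of continuity of $F$. Comparison gives $\lambda\|v^\lambda_\theta-v^\lambda_{\theta'}\|_\infty\le\omega_F(|\theta-\theta'|)$. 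Combine this with the $\eta$-density of every orbit segment $\{\mathcal C(x+z):|z|\le L(\eta)\}$, which follows from minimality and compactness of $\mathbb T^n$. Together these give $\operatorname{osc}(\lambda v^\lambda)\le\omega_F(\eta)+C\lambda L(\eta)$. Unique ergodicity plays no role.

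In Step~2, your first definition of $w^\pm$ already has the correct drifts, so the garbled intermediate computation should be deleted. Also, the global $p$-Lipschitz bound in \eqref{eqn:lipschitz} fails for $|p|^k/k$. You should therefore invoke comparison for Lipschitz sub- and supersolutions, or after modifying $H$ for large $|p|$. This is legitimate because $w_0$ and $v_\delta$ are Lipschitz, and so is $w$ by coercivity.
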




 It is clear that solving equation \eqref{eqn:long-time} is equivalent to solve the QHJE \eqref{eqn:QHJE}. Therefore, we can employ the SL-FPR scheme to obtain an accurate numerical approximation.  However, its solution converges to a solution of \eqref{eqn:corrector_aim_1} up to a linear term \( c_1 t \). To approximate the constant \(c_1\), we  follow the strategy introduced by Qian and Rorro~\cite{rorro2006approximation,qian2003two} by averaging \(w_t\), \textit{i.e.},
\[
    \overline v^{\,n}
    =
    h\sum_i v_i^n,
    \qquad
    v_i^n
    =
    \frac{w_i^n-w_i^{n-1}}{\tau},
\]
where \(w_i^n\) is computed by \eqref{eqn:sls_fpr}.
For a prescribed tolerance \(\varepsilon>0\), the iteration terminates
when
\[
    |\overline v^{\,n+1}-\overline v^{\,n}|
    \leq\varepsilon.
\]
The converged value \(\overline v\) then gives
\begin{equation}
\label{eqn:hamiltonian_update}
    \overline H(p)=c_0-\overline v.
\end{equation}
Typically, we initialize the update process with $c_0 = -\min f$.



\section{Numerical tests}\label{sec:numerical_tests}
In this section, we present two classes of numerical tests to demonstrate the accuracy and adaptability of the SL-FPR scheme.  The first class concerns the numerical solution of QHJEs, while the second concerns the approximation of effective Hamiltonians.
 All experiments were conducted using MATLAB R2024a on a laptop equipped with an Intel Core 2.20GHz CPU and 16GB of RAM.

For the QHJE tests, the error at the final time \(t_M\) is measured by
\begin{equation*}
    e_u
    :=
    \max_{1\leq i\leq N}
    \left|
        u_i^M-u(x_i,t_M)
    \right|.
\end{equation*}
For the effective-Hamiltonian tests, let
\[
    \mathcal P_{\Delta p}
    :=
    \{p_j:\,p_j=j\Delta p,\ 0\leq p_j\leq4\}.
\]
Whenever a theoretical or high-accuracy reference
\(\overline H_{\rm exa}\) is available, let
\(\overline H_{\rm app}\) denote the effective Hamiltonian computed
using the SL--FPR scheme. We then define
\begin{equation*}
    e_p(p_j)
    :=
    \left|
        \overline H_{\rm app}(p_j)
        -
        \overline H_{\rm exa}(p_j)
    \right|,
    \qquad
    e_N
    :=
    \max_{p_j\in\mathcal P_{\Delta p}}
    e_p(p_j),
\end{equation*}
where \(N=2\pi/h\). The error order is calculated by
\begin{equation*}
    \kappa
    :=
    \frac{\log(e_1/e_2)}
         {\log(h_1/h_2)}.
\end{equation*}
\subsection{Quasiperiodic Hamilton--Jacobi equations}

In the first test, we assess the accuracy of the proposed SL--FPR scheme. 
Specifically, we consider the quasiperiodic Hamilton--Jacobi equation 
\eqref{eqn:QHJE} posed on the domain
\[
\Omega = [-99\pi,\,101\pi],
\]
with the quasiperiodic Hamiltonian
\[
H(x,p)=\frac{1}{2}|p|^2 - f(x),
\]
where
\[
f(x)=\frac{1}{2}\bigl(\sin x + \sin(\sqrt{2}\,x)\bigr)^2 + 1.
\]
The exact solution is explicitly given by
\[
u(x,t)
=
-\cos x
-\frac{1}{\sqrt{2}}\cos(\sqrt{2}\,x)
+ t.
\]
The primary objective of this experiment is to verify the spatial
convergence order of the SL--FPR scheme. Here, the time step is
chosen sufficiently small, namely $\tau = 10^{-7}$, such that the temporal
discretization error is negligible and the overall error is dominated by
the FPR interpolation in space. The spatial errors and corresponding convergence orders for the SL–FPR(1), SL–FPR(2), and SL–FPR(3) schemes are presented in \Cref{tab:QHJE-conv}. The numerical results indicate convergence rates of approximately 2, 3, and 4, respectively, confirming that these schemes achieve the expected second-, third-, and fourth-order spatial accuracy, respectively. Moreover, increasing the FPR interpolation order yields smaller errors
on the same mesh and a higher rate of error decay under mesh refinement.  These results are consistent with the convergence analysis and the reliability of the proposed SL–FPR schemes for solving QHJEs.

\begin{table}[!htbp]
\centering
\footnotesize
\caption{Spatial error $e_u$ and convergence order $\kappa$ of SL-FPR(1), (2), (3) for solving the QHJE \eqref{eqn:QHJE} with $\tau = 10^{-7}$.}\label{tab:QHJE-conv}
\begin{tabular}{|c|l|c|c|c|c|}
\hline
 & $h$ & $2\pi/20$ & $2\pi/40$ & $2\pi/80$ & $2\pi/160$ \\ 
\hline
\multirow{2}{*}{SL-FPR(1)} 
& $e_u$       & 2.92E-01 & 1.05E-01 & 2.65E-02 & 6.60E-03 \\
& $\kappa$  & --       & 1.47     & 1.98     & 2.01 \\
\hline
\multirow{2}{*}{SL-FPR(2)} 
& $e_u$       & 7.34E-02 & 1.42E-02 & 1.78E-03 & 2.22E-04 \\
& $\kappa$  & --       & 2.37     & 2.99     & 3.00 \\
\hline
\multirow{2}{*}{SL-FPR(3)} 
& $e_u$       & 5.87E-02 & 6.01E-03 & 3.76E-04 & 2.35E-05 \\
& $\kappa$  & --       & 3.29     & 3.99     & 4.00 \\
\hline
\end{tabular}
\end{table}

\subsection{Quasiperiodic effective Hamiltonians}
We next investigate the numerical approximation of the effective
Hamiltonian associated with
\begin{equation*}
    H(x, p)
    =
    \frac{1}{k}|p|^k-f(x),
    \qquad k>1.
\end{equation*}
The experiments serve three purposes. The quadratic case \(k=2\),
for which an established homogenization formula is available, is first
used to validate the computation of \(\overline H\) by the SL--FPR
scheme. We then consider \(k\neq2\) to examine the extension established
in \Cref{thm:keh}. Finally, low-regularity quasiperiodic potentials are
used to assess the robustness of the method beyond smooth test cases.

\subsubsection{Case 1: \texorpdfstring{$k=2$}{k=2}}\label{sec:k2_case}
~We first examine the case \(k = 2\), for which homogenization results have been established in \cite[Lemma~3.1]{hu2024polynomial}. This allows us to verify the reliability of the SL--FPR scheme in computing effective Hamiltonians.  We consider the corresponding  corrector problem
\begin{equation}
    H(x, p + v') = \frac{1}{2}|p + v'|^2 - f(x) = \overline{H}(p), \quad x\in\mathbb{R} 
\end{equation}
with quasiperiodic potential term
$
f (x) = \cos (2\pi x) + \cos (2\pi\sqrt{2} x),\, x\in\mathbb{R}.
$
From the definition of quasiperiodic functions, the parent function is 
$
F(\bm{y}) = \cos(y_1) + \cos(y_2),\, \bm{y} = (y_1, y_2)\in\mathbb{T}^2. 
$
Thus, we can calculate the exact Hamiltonian by the integration formula
\begin{equation}\label{eqn:reference_value}
    \overline{H}(p) = \left\{\begin{aligned}
        &-\min_{x\in\mathbb{R}} f, \quad|p|\leq p_0\\
        &\lambda:  p=\int_{\mathbb{T}^2}\sqrt{2(\lambda+F(\bm{y}))}\,d\bm{y}, \quad |p| > p_0
    \end{aligned}
    \right.
\end{equation}
with $p_0 = \displaystyle\int_{\mathbb{T}^2}\sqrt{2(F(\bm{y})-\min_{x\in\mathbb{R}} f)}\,d\bm{y}.$
The reference value, denoted by \(\overline{H}_{\mathrm{exa}}\), is computed using Gauss--Legendre quadrature to evaluate the integral in \eqref{eqn:reference_value}. The quadrature tolerance is set to \(1\times10^{-10}\), so that the reference error is negligible compared with the discretization errors reported below. For the numerical computation of \(\overline{H}(p)\) using the SL--FPR scheme, we set the time step to \(\tau=h/2\) and the stopping tolerance to \(\varepsilon=h\tau\). The iteration is terminated when $|c_{m+1}-c_m|<\varepsilon$.

We first compare
\(\overline H_{\rm app}\) with
\(\overline H_{\rm exa}\) in
\Cref{fig:k2_hamiltonian_value}.
The two curves agree closely over the entire interval
\(p\in[0,4]\).
More importantly, the numerical result reproduces the two distinct
regimes established by \eqref{eqn:reference_value}. It remains nearly
constant at $\overline H = -\min\limits_{x\in\mathbb{R}}f = 2$~
for ~ \(0\leq p < p_0\),  and exhibits quadratic growth for
\(p>p_0\).
These results demonstrate that the numerical approximation accurately
reproduces both the values and the theoretical structure of the
effective Hamiltonian. The corresponding pointwise error $e_p$ is displayed in
\Cref{fig:k2_hamiltonian_error}. The error remains small over the sampled interval, with slightly larger values observed near the transition point \(p_0\) between the flat and increasing branches.
Away from this transition, the numerical and reference effective
Hamiltonians exhibit closer agreement. 

To further assess the convergence of the SL--FPR scheme, we examine the
maximum error \(e_N\). As shown in \Cref{tab:error_table}, \(e_N\)
decreases monotonically under mesh refinement, and the observed
convergence orders are essentially one. For the SL--FPR(1) scheme,
\Cref{thm:convergence} gives the error estimate
\(\mathcal{O}(h^2)+\mathcal{O}(\tau)\). Since \(\tau=h/2\), the temporal
error is of order \(\mathcal{O}(h)\) and dominates the FPR interpolation error
 \(\mathcal{O}(h^2)\), resulting in the first-order
convergence observed in \Cref{tab:error_table}.


\begin{figure}[!htbp]
\centering

\subfigure[]{
    \includegraphics[width=5cm]{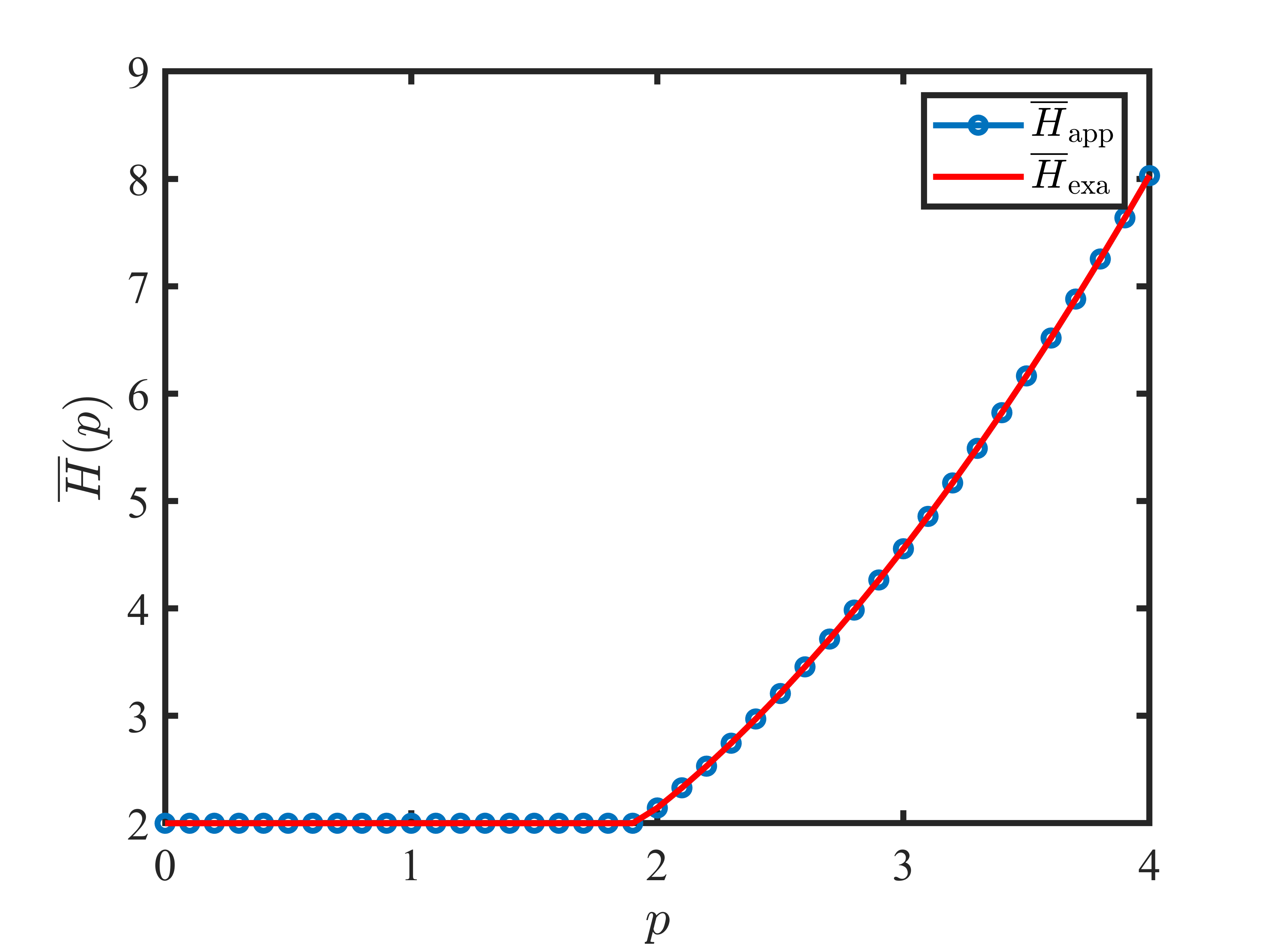}
    \label{fig:k2_hamiltonian_value}
}
\hspace{5mm}
\subfigure[]{
    \includegraphics[width=5cm]{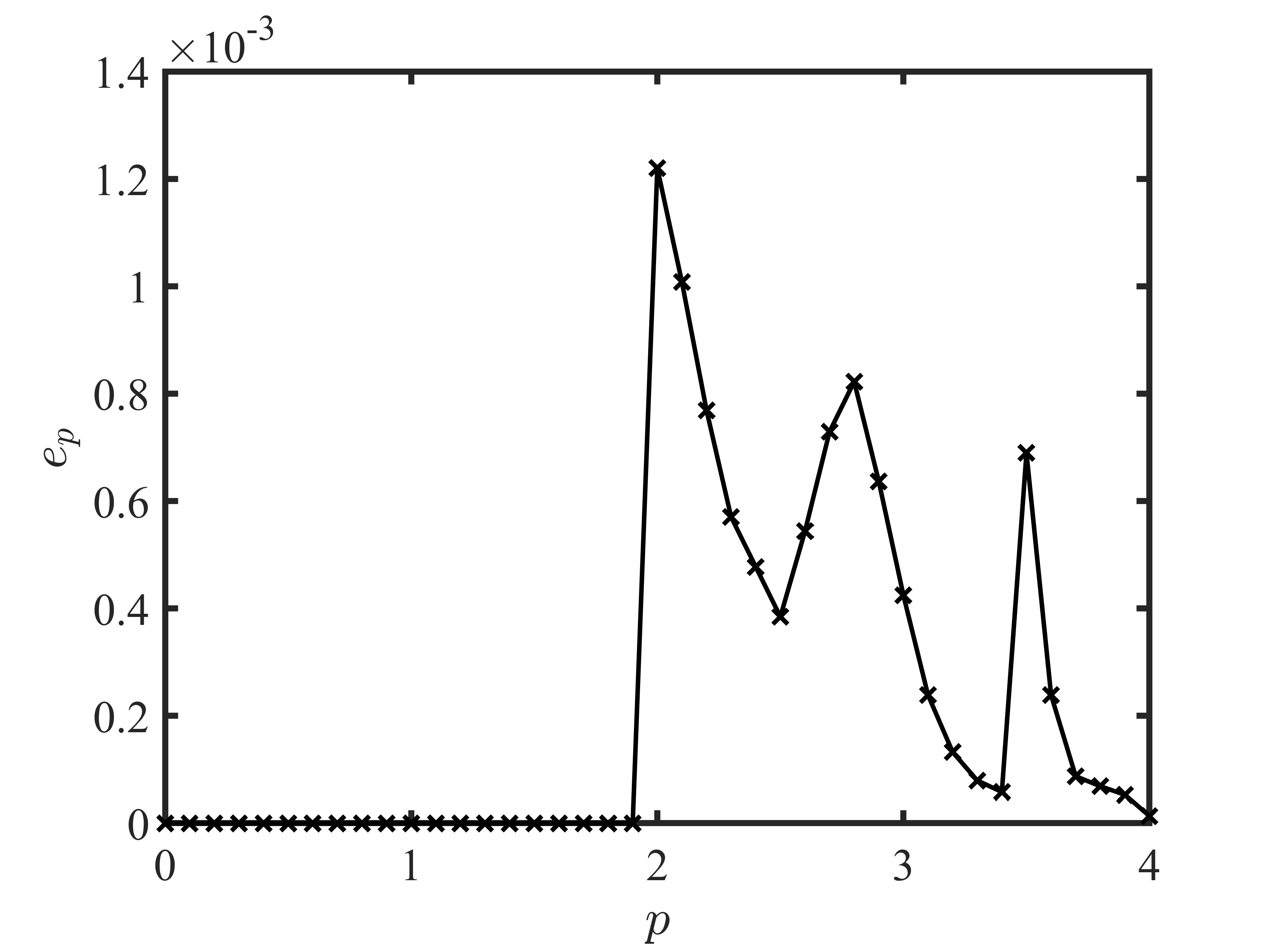}
    \label{fig:k2_hamiltonian_error}
}

\caption{
Numerical approximation and pointwise error of the effective Hamiltonian
for the smooth quasiperiodic potential with \(k=2\).
\textup{(a)} Comparison between the computed effective Hamiltonian
\(\overline H_{\rm app}\) and the reference value
\(\overline H_{\rm exa}\) for \(p\in[0,4]\) with \(h=2\pi/160\);
\textup{(b)} corresponding pointwise error
\(e_p\).
}
\label{fig:quasiperiodic_effective_hamiltonian}

\end{figure}

\begin{table}[!htbp]
\centering
\footnotesize
\caption{
Numerical errors $e_N$  over
\(p\in[0,4]\) for different  \(h = 2\pi/N\), with \(\Delta p=0.1\).
}
\label{tab:error_table}
\begin{tabular}{c c c c c c}
\toprule
$N$
& $10$
& $20$
& $40$
& $80$
& $160$ \\
\midrule
$e_N$
& $1.96\times10^{-2}$
& $9.82\times10^{-3}$
& $4.91\times10^{-3}$
& $2.45\times10^{-3}$
& $1.22\times10^{-3}$ \\[2pt]
$\kappa$
& --
& $0.99$
& $1.00$
& $1.00$
& $1.00$ \\
\bottomrule
\end{tabular}
\end{table}




\subsubsection{Case 2: \texorpdfstring{$k>1,\,k\neq2$}{k>1, k neq 2}}
~The numerical results in \Cref{sec:k2_case} validate the SL--FPR
scheme for the case \(k=2\), where the existing homogenization result
provides a theoretical reference. Having
confirmed its reliability in this case, we apply the scheme to
\(k>1\), \(k\neq2\), to numerically verify the extended homogenization
result in \Cref{thm:keh}. The corresponding  corrector equation reads 
\begin{equation}\label{eqn:cubic}
    H(x, p+v') = \frac{1}{k}|p+v'|^k - f(x) = \overline{H}(p), \quad k>1.
\end{equation}
Here, we select $f(x) = 2-\cos (2\pi x) - \cos(2\pi\sqrt{2}x)$. We first apply the SL-FPR scheme to solve \eqref{eqn:cubic} with
$$H(p) = \frac{1}{3}|p|^3 - f(x).$$ 


Since no independently established reference solution is available for
\(k\neq2\), we assess
the reliability of the computed effective Hamiltonian through the following theoretical consistency checks and numerical convergence.
\begin{enumerate}[label=(\arabic*)]

\item \label{item:flat_region}
For a convex Hamiltonian with a potential satisfying
$\min\limits_{x\in\mathbb{R}} f(x)=0$, one has
$\min\limits_{p\in\mathbb{R}}\overline{H}(p) = \overline{H}(0) = 0$,
and \(\overline{H}\) typically exhibits a flat region near its minimum.
These qualitative properties are clearly captured by
\(\overline{H}_{\mathrm{app}}\) in \Cref{fig:quasi_cube}.

\item \label{item:convexity}
Since  $H$ is convex in \(p\), the corresponding
effective Hamiltonian \(\overline{H}\) is also convex
\cite[Theorem~5.20]{tran2021hamilton}.
The computed \(\overline H_{\rm app}\) preserves this structural
property, as shown in \Cref{fig:quasi_cube}.

\item \label{item:convergence}
The numerical effective Hamiltonian demonstrates convergence. As shown in \Cref{fig:cubeerr1}, the solution computed with
\(h=2\pi/160\) is taken as a numerical reference approximation.
As the mesh size is halved, the errors decrease approximately by a factor of two, indicating first-order convergence and supporting the reliability of the numerical results.

\end{enumerate}

\begin{figure}[!htbp]
    \centering
    \subfigure[]{
        \includegraphics[width=5cm]{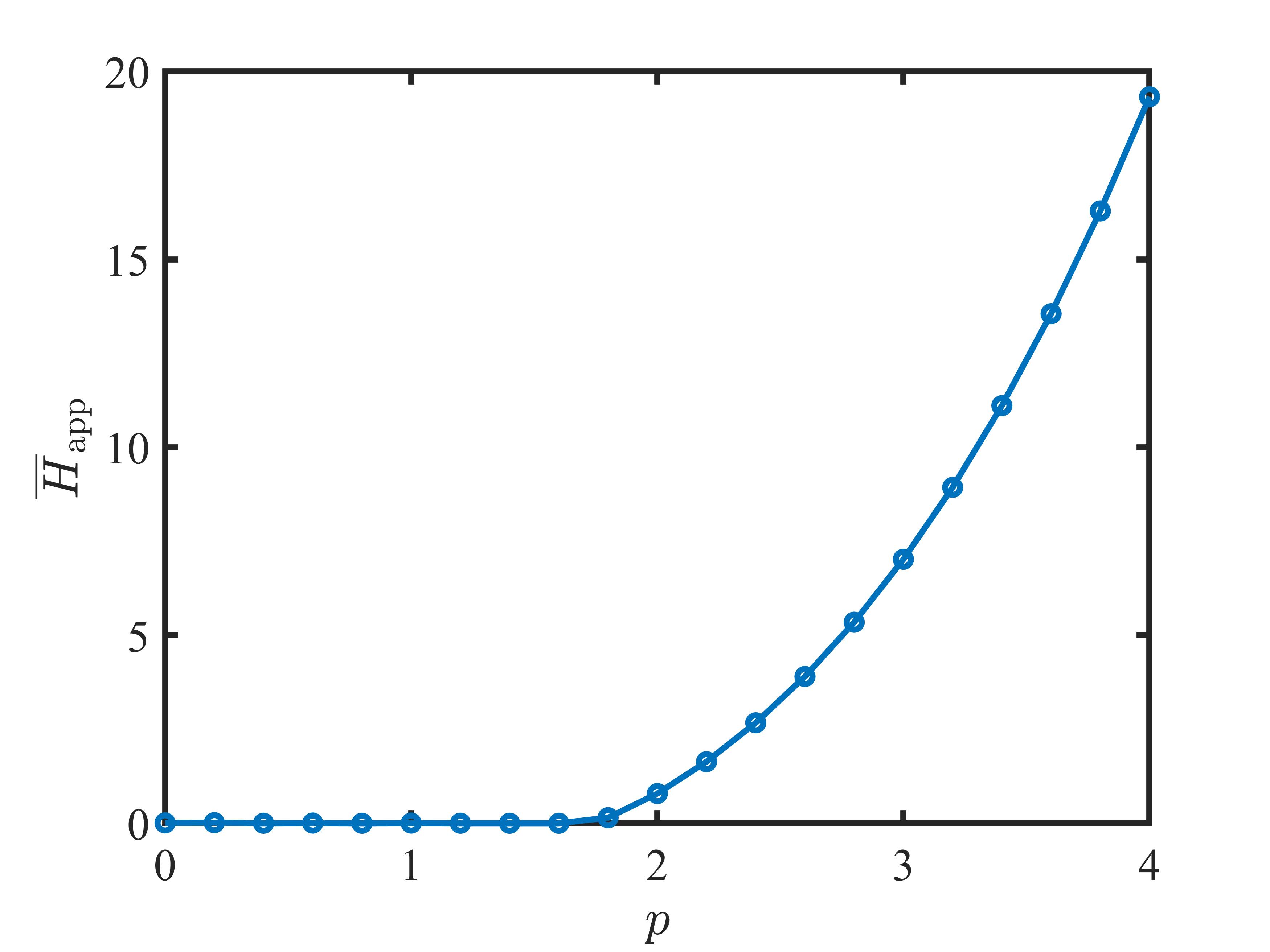}
        \label{fig:quasi_cube}
    }
    \hspace{2mm}
    \subfigure[]{
        \includegraphics[width=5cm]{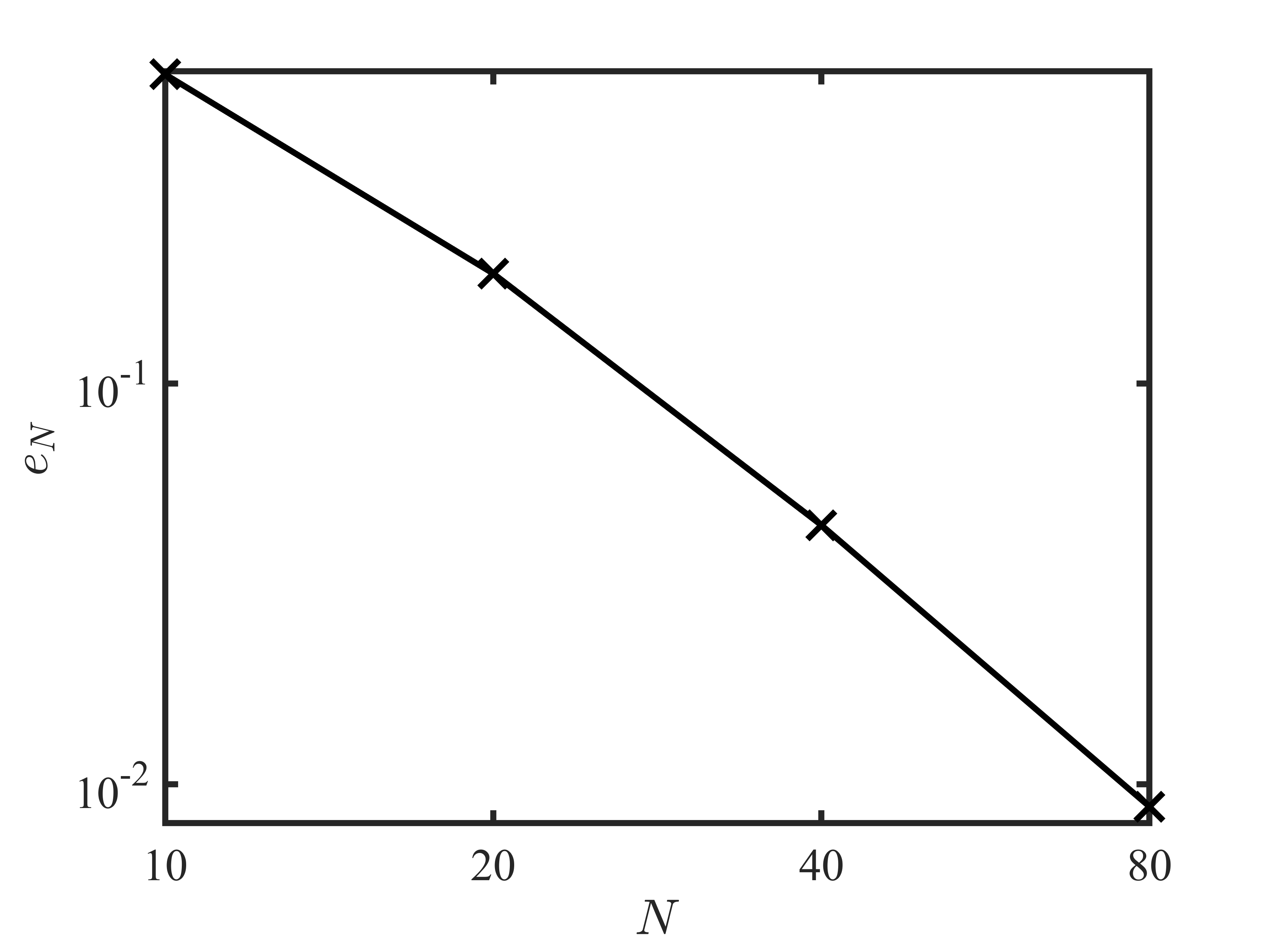}
        \label{fig:cubeerr1}
    }
   \caption{Numerical results of $\overline{H}$ for the smooth quasiperiodic potential in the case $k=3$. \textup{(a)} Computed effective Hamiltonian $\overline H_{\rm app}$ for $p\in[0,4]$ with $h=2\pi/160$ and $\Delta p=0.2$; \textup{(b)} errors $e_N$ under mesh refinement for \(N=10,20,40,80\), with the
finest-grid solution at \(N=160\) taken as the reference approximation.}
    \label{fig:quasiperiodic_cube_Hamiltonian}
\end{figure}


To further validate our theoretical framework, we extended the numerical experiments to a broader range of exponents with $k = 1.5, 2.5, 3$ and $4$. As shown in \Cref{fig:kqhe}, the computed effective Hamiltonians
consistently exhibit the flat region and convexity established in \Cref{thm:keh}, followed by the expected \(k\)-th power growth as \(p\) increases. The agreement for both integer and fractional values
of \(k\) provides further numerical evidence for the homogenization
result for general \(k>1\).

\begin{figure}[htbp]
    \centering
    {
        \includegraphics[width=5.5cm]{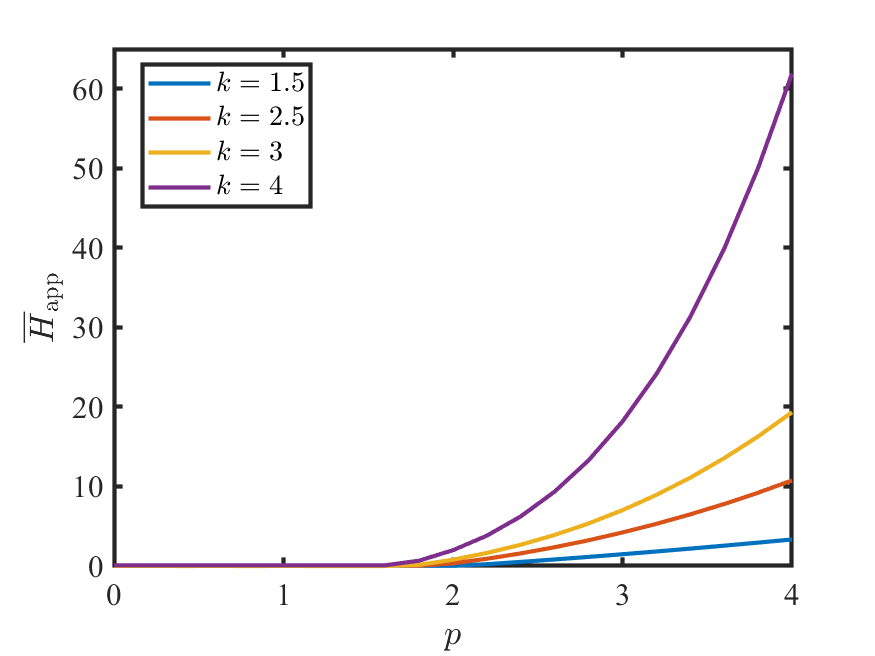}
    }
   
    \caption{
        Numerical effective Hamiltonians for $p \in [0, 4]$ with $\Delta p = 0.2$ under different $k$.
    }
    \label{fig:kqhe}
\end{figure}

\subsubsection{Case 3: Low-regularity quasiperiodic potentials}

~Motivated by the suitability of the SL-FPR scheme for low-regularity
quasiperiodic systems, we consider two representative examples to assess
the performance of the proposed method.
We focus on the quasiperiodic corrector equation
\[
H(x,p+v')=\frac{1}{2}|p+v'|^2-f(x)=\overline H(p),
\]
where \(f(x)\) is non-smooth.

\textbf{Example~1: $C^0$ quasiperiodic potential test.}
~We first consider the continuous but non-smooth potential
\begin{equation}\label{eq:c0_potential}
f(x)=2-\lvert\cos(2\pi x)\rvert-\lvert\cos(2\sqrt{2}\pi x)\rvert.
\end{equation}
The function \(f\) belongs to \(C^0(\mathbb{R})\) and is Lipschitz continuous, but fails to be differentiable at some points.
The exact effective Hamiltonian $\overline{H}(p)$ 
can still be computed by the  formula \eqref{eqn:reference_value}.

As shown in \Cref{fig:quasiperiodic_c0_Hamiltonian}, the SL--FPR scheme remains accurate
for this Lipschitz-continuous but nonsmooth potential. The computed
effective Hamiltonian is in good agreement with the reference value,
and the errors retain the expected first-order decay. This example demonstrates the robustness of the SL--FPR scheme for
the numerical homogenization of low-regularity QHJEs.

\begin{figure}[!htbp]
    \centering
    \subfigure[]{
        \includegraphics[width=5cm]{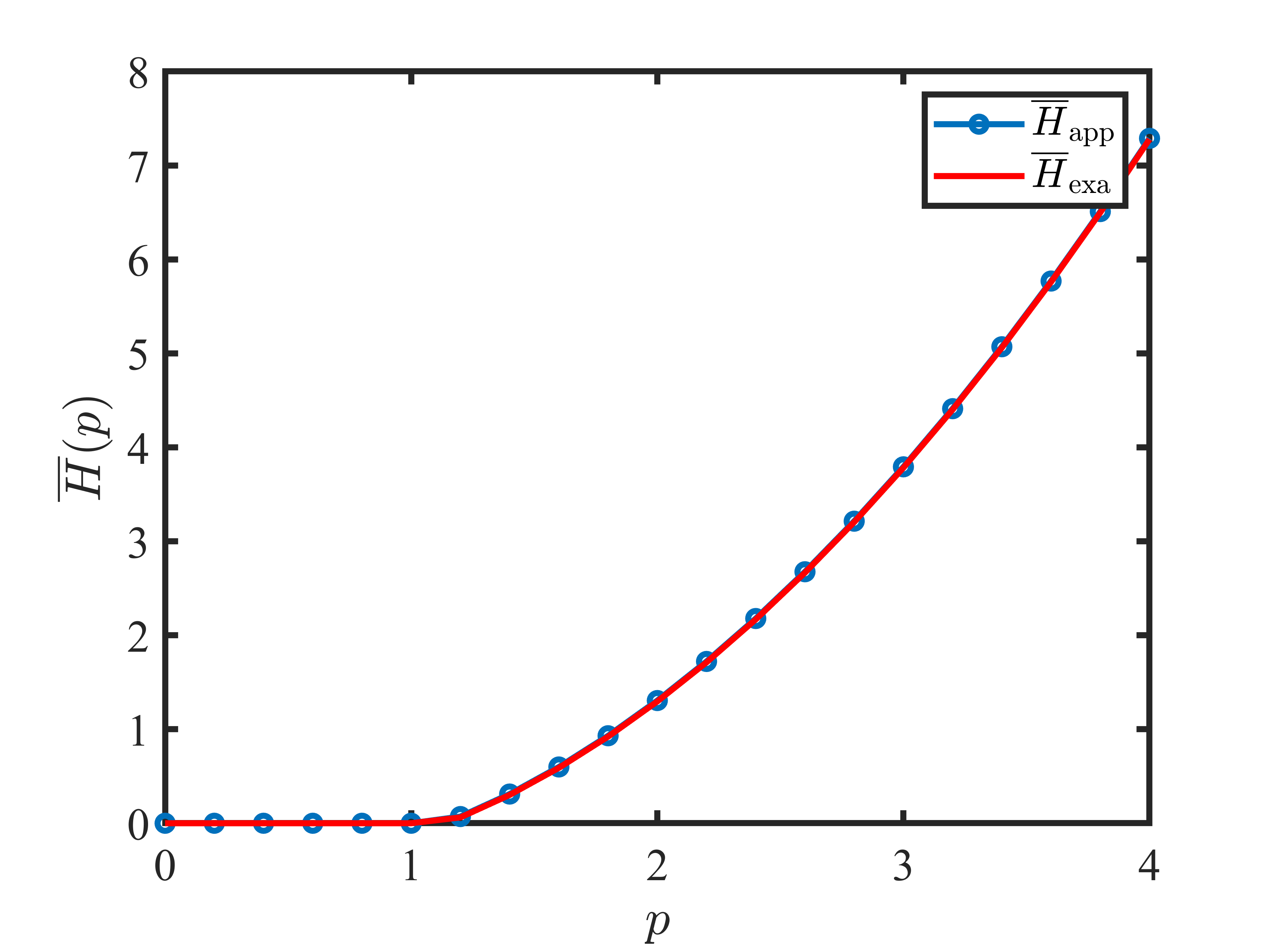}
        \label{fig:quasi_c0}
    }
    \hspace{2mm}
    \subfigure[]{
        \includegraphics[width=5cm]{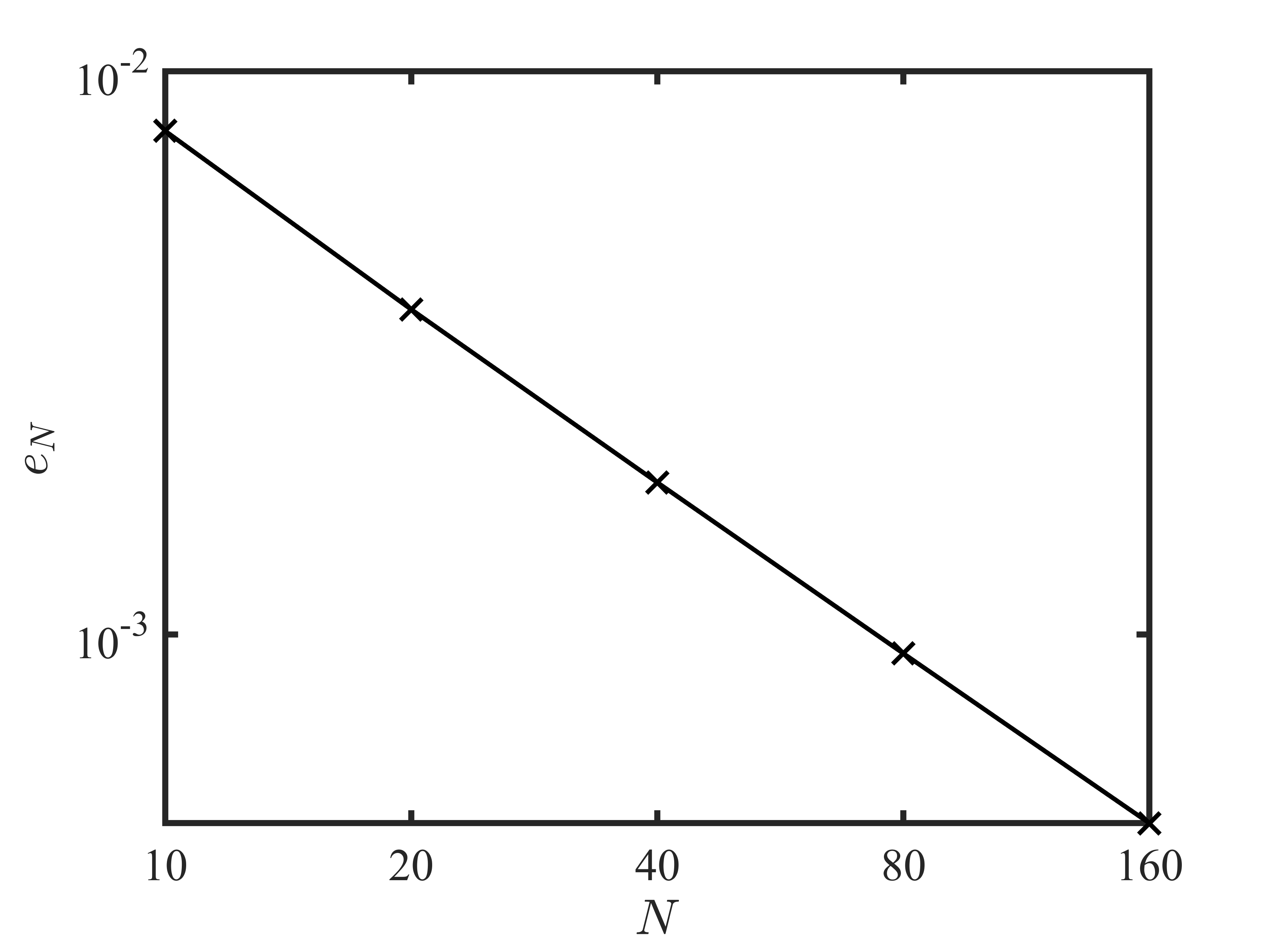}
        \label{fig:c0err}
    }
    \caption{Numerical effective Hamiltonian for a $C^0$ quasiperiodic potential \eqref{eq:c0_potential}. \textup{(a)} $\overline H_{\rm exa}$  $\overline H_{\rm app}$ for $p\in[0,4]$ with $h=2\pi/160$ and $\Delta p=0.2$; \textup{(b)} errors $e_N$ between the reference solution and the computed approximation for $N=10,20,40,80,160$.}
    \label{fig:quasiperiodic_c0_Hamiltonian}
\end{figure}

\textbf{Example~2: Fibonacci quasiperiodic potential test.}
~We next consider a discontinuous, piecewise-constant Fibonacci-type
quasiperiodic potential. For numerical interpolation, each jump is
replaced by a short linear transition, resulting in a Lipschitz-continuous
piecewise-linear approximation, as shown in
\Cref{fig:fibonacci_regularized}. Compared with  Example~1, the
regularized Fibonacci potential exhibits a distinct piecewise structure
with localized transition regions inherited from the original
discontinuities. This provides a different and more challenging test of the SL--FPR scheme for the numerical homogenization of low-regularity QHJEs.

The computed effective Hamiltonian is shown in
\Cref{fig:fibonacci_hamiltonian}, while the corresponding numerical
errors $e_N$ are presented in
\Cref{fig:fibonacci_error}.
The computed effective Hamiltonian exhibits stable convergence under
mesh refinement. Although order reduction is observed on coarse meshes, the expected
convergence rate is recovered as the mesh is refined. This provides
further evidence of the applicability of the SL-FPR scheme in the
low-regularity case.
\begin{figure}[!htbp]
\centering

\subfigure[]{
    \includegraphics[width=0.55\textwidth]
    {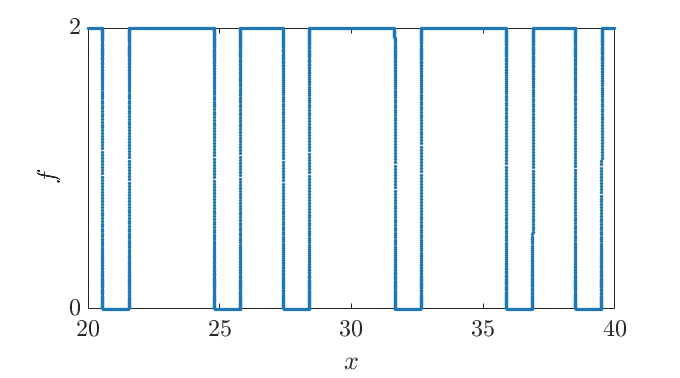}
    \label{fig:fibonacci_regularized}
}

\par\vspace{2mm}

\subfigure[]{
    \includegraphics[width=0.4\textwidth]
    {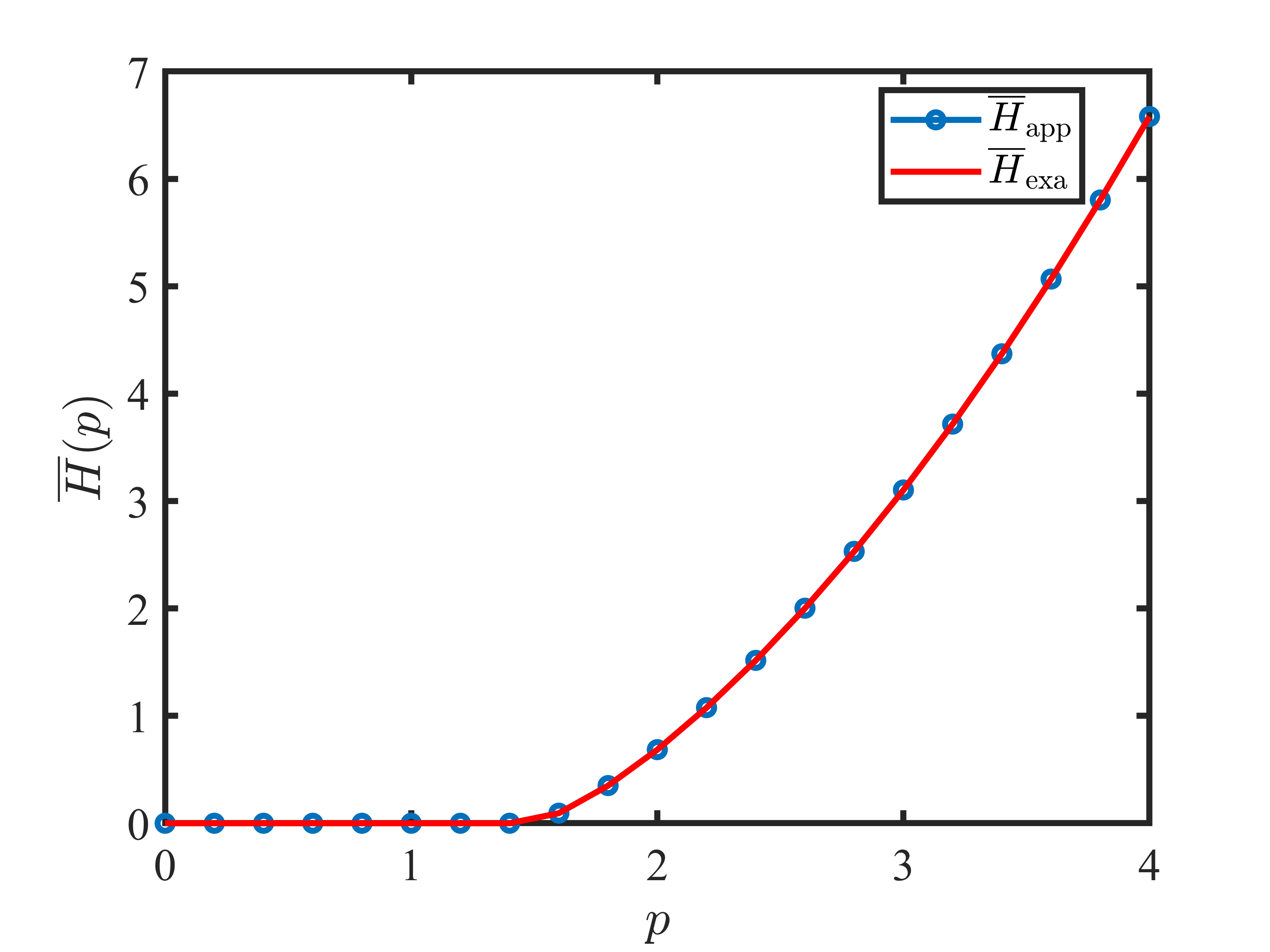}
    \label{fig:fibonacci_hamiltonian}
}%
\hfill
\subfigure[]{
    \includegraphics[width=0.4\textwidth]
    {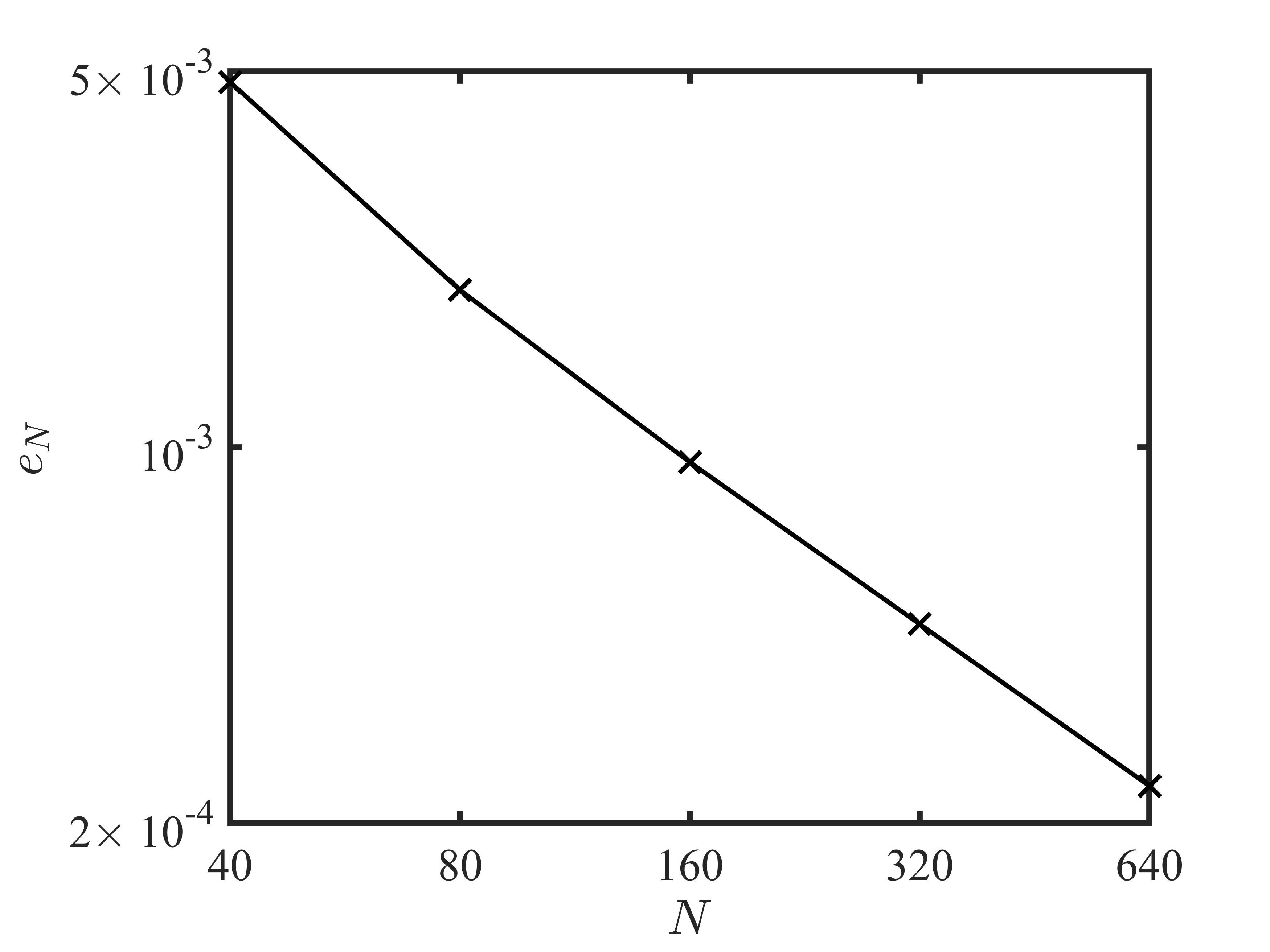}
    \label{fig:fibonacci_error}
}

\caption{
Numerical results for the Fibonacci-type quasiperiodic potential.
\textup{(a)} Regularized potential on \(x\in[20,40]\);
\textup{(b)} comparison of \(\overline H_{\rm exa}\) and
\(\overline H_{\rm app}\) on \(p\in[0,4]\) with
\(h=2\pi/160\) and \(\Delta p=0.2\);
\textup{(c)} error \(e_N\) under mesh refinement.}
\label{fig:quasiperiodic_fibonaci_Hamiltonian}

\end{figure}


\section{Conclusions and outlooks}\label{sec:conclusion}
We developed a computable finite-size formulation and a
high-accuracy numerical scheme for QHJEs with \(k\) th-power convex Hamiltonians. By reformulating the problem with quasiperiodic boundary conditions and combining SL discretization with the FPR method, we established an error estimate of the proposed scheme and demonstrated its effectiveness in computing effective Hamiltonians. Numerical tests confirmed the accuracy of the method, including applications to Hamiltonians without known analytical solutions before this work. 

 Future work includes extending the homogenization results and the SL--FPR scheme from one dimension to higher dimensions. Moreover, developing results for the non-convex setting remains a challenging problem. Finally, extending the proposed approach to other classes of multiscale
problems, such as quasiperiodic Schr\"odinger and Navier--Stokes equations,
requires further investigation.

\end{document}

%% file: ex_shared.tex
\headers{Method for quasiperiodic HJE \& homogenization}
{Authors...}

\title{ A High-Accuracy Numerical Homogenization Framework for Quasiperiodic Hamilton--Jacobi Equations
\thanks{Submitted to xxx.
}
}

 \author{Kai Jiang\thanks{Hunan Key Laboratory for Computation and Simulation in Science and Engineering, Key Laboratory of Intelligent Computing and Information Processing of Ministry of Education, School of Mathematics and Computational Science, Xiangtan University, Xiangtan, Hunan, 411105, China
  (\email{kaijiang@xtu.edu.cn, limeng@smail.xtu.edu.cn, zhangjuan@xtu.edu.cn}).}
\and Meng Li\footnotemark[2]
\and Juan Zhang\footnotemark[2]
\and Lei Zhang\thanks{School of Mathematical Sciences, Institute of Natural Sciences, MOE-LSC, Shanghai Jiao Tong University, Shanghai, 200240, China
  (\email{lzhang2012@sjtu.edu.cn}).}}

\usepackage{mathrsfs,amsmath,amssymb,bm}
\usepackage{lipsum}
\usepackage{amsfonts}
\usepackage{graphicx, subfigure}
\usepackage{epstopdf}
\usepackage{makecell, rotating, bbding}
\usepackage{multirow}
\usepackage{algorithmic, algorithm}
\usepackage{enumerate}
\usepackage{booktabs}
\usepackage[misc]{ifsym}
\usepackage{mathtools}
\usepackage{lipsum}
\usepackage{amsopn}

\usepackage{aligned-overset}
\usepackage{mathtools}
\usepackage{nicematrix}
\usepackage{tikz}
\usepackage{dashbox}
\usetikzlibrary{calc}
\usepackage{hyperref}
\usepackage{enumitem}
\usepackage{enumerate}

\newcommand{\ave}[1]{\langle #1 \rangle} 
\newtheorem{thm}{Theorem}[section]

\newtheorem{remark}[thm]{Remark}

\newsiamremark{hypothesis}{Hypothesis}
\crefname{hypothesis}{Hypothesis}{Hypotheses}
\newsiamthm{claim}{Claim}

\makeatletter
\newcommand*{\addFileDependency}[1]{
  \typeout{(#1)}
  \@addtofilelist{#1}
  \IfFileExists{#1}{}{\typeout{No file #1.}}
}
\makeatother


%% file: main_SINUM1.bbl
\begin{thebibliography}{99}

\bibitem{bardi1997optimal}
{M. Bardi and I. C. Dolcetta}, {\it Optimal control and viscosity solutions of Hamilton--Jacobi--Bellman equations}, Amer. Math. Soc., {\bf213}(2021).

\bibitem{concordel1996periodic}
{M. C. Concordel}, {\it Periodic homogenization of Hamilton--Jacobi equations: additive eigenvalues and variational formula}, Indiana Univ. Math. J., {\bf45}(1996), 1095--1117.

\bibitem{concordel1997periodic}
{M. C. Concordel}, {\it Periodic homogenisation of Hamilton--Jacobi equations. II. Eikonal equations}, Proc. Roy. Soc. Edinburgh Sect. A, {\bf127}(1997), 665--689.

\bibitem{mitake2014homogenization}
{H. Mitake and H. V. Tran}, {\it Homogenization of weakly coupled systems of Hamilton--Jacobi equations with fast switching rates}, Arch. Ration. Mech. Anal., {\bf221}(2014), 733--769.

\bibitem{contreras1998lagrangian}
{H. Mitake and H. V. Tran}, {\it Lagrangian graphs, minimizing measures and Mane's critical values}, Geom. Funct. Anal., {\bf8}(1998), 788--809.

\bibitem{qian2018min}
{J. Qian, H. V. Tran and Y. Yu}, {\it Min-max formulas and other properties of certain classes of nonconvex effective Hamiltonians}, Math. Ann., {\bf372}(2018), 91--123.

\bibitem{falcone2008on}
{M. Falcone and M. Rorro}, {\it On a variational approximation of the effective Hamiltonian}, in {\it Numerical Mathematics and Advanced Applications}, Springer, (2018), 719--726.

\bibitem{gomes2004computing}
{D. A. Gomes and A. M. Oberman}, {\it Computing the effective Hamiltonian using a variational approach}, SIAM J. Control Optim., {\bf43}(2004), 792--812.

\bibitem{deville1993smooth}
{R. Deville, G. Godefroy and V. Zizler}, {\it Optimal control and viscosity solutions of Hamilton--Jacobi--Bellman equations}, J. Funct. Anal., {\bf111}(1993), 197--212.

\bibitem{ishii2000almost}
{H. Ishii}, {\it Almost periodic homogenization of Hamilton--Jacobi equations}, in {\it International Conference on Differential Equations}, {\bf1}(2000), 600--605.

\bibitem{tran2021hamilton}
{H. V. Tran}, {\it Hamilton--Jacobi equations: theory and applications}, Amer. Math. Soc., {\bf213}(2021).

\bibitem{hu2024polynomial}
{B. Hu, S. N. T. Son and J. Zhang}, {\it Polynomial convergence rate for quasiperiodic homogenization of Hamilton--Jacobi equations}, Commun. Partial Differential Equations,  {\bf50}(2024), 1--34.

\bibitem{giga2021existence}
{Y. Giga, H. Mitake, T. Ohtsuka and H. V. Tran}, {\it Existence of asymptotic speed of solutions to birth-and-spread type nonlinear partial differential equations}, Indiana Univ. Math. J., {\bf70}(2021), 121--156.

\bibitem{falcone2013semi}
{M. Falcone and R. Ferretti}, {\it Semi-Lagrangian approximation schemes for linear and Hamilton--Jacobi equations}, SIAM, 2013.

\bibitem{rorro2006approximation}
{M. Rorro}, {\it An approximation scheme for the effective Hamiltonian and applications}, Appl. Numer. Math., {\bf56}(2006), 1238--1254.

\bibitem{lions1982generalized}
{P. L. Lions}, {\it Generalized solutions of Hamilton--Jacobi equations}, Res. Notes Math., {\bf69}(1982).

\bibitem{lions1987homogenization}
{P. L. Lions, G. Papanicolaou and S. R. Srinivasa Varadhan}, {\it Homogenization of Hamilton--Jacobi equations}, unpublished work, 1987.

\bibitem{lions2003correctors}
{P. L. Lions and P. E. Souganidis}, {\it Correctors for the homogenization of Hamilton--Jacobi equations in the stationary ergodic setting}, Commun. Pure Appl. Math., {\bf56}(2003), 1501--1524.

\bibitem{evans2022partial}
{L. C. Evans}, {\it Partial differential equations}, Grad. Stud. Math., {\bf19}, Amer. Math. Soc., 2022.

\bibitem{crandall1983viscosity}
{M. Crandall and P. L. Lions}, {\it An approximation scheme for the effective Hamiltonian and applications}, Trans. Amer. Math. Soc., {\bf277}(1983), 1--45.

\bibitem{osher1988fronts}
{S. Osher and J. A. Sethian}, {\it Fronts propagating with curvature-dependent speed: Algorithms based on Hamilton--Jacobi formulations}, J. Comput. Phys., {\bf79}(1988), 12--49.


\bibitem{li2003numerical}
{X. Li, W. Yan and C. K. Chan}, {\it Numerical schemes for Hamilton--Jacobi equations on unstructured meshes}, Numer. Math., {\bf94}(1987), 315--331.

\bibitem{jiang2023approximation}
{K. Jiang, S. Li and P. Zhang}, {\it On the approximation of quasiperiodic functions with Diophantine frequencies by periodic functions}, SIAM J. Math. Anal., {\bf57}(2025), 951--978.

\bibitem{jiang2025projection}
{K. Jiang, M. Li, J. Zhang and L. Zhang}, {\it Projection method for quasiperiodic elliptic equations and application to quasiperiodic homogenization}, SIAM J. Numer. Anal., {\bf63}(2025), 1962--1985.

\bibitem{jiang2024convergence}
{K. Jiang, M. Li, J. Zhang and L. Zhang}, {\it Convergence analysis of PM-BDF method for quasiperiodic parabolic equations}, J. Sci. Comput., {\bf104}(2025), 1--21.

\bibitem{jiang2014numerical}
{K. Jiang and P. Zhang}, {\it Numerical methods for quasicrystals}, J. Comput. Phys., {\bf256}(2014), 428--440.

\bibitem{jiang2018numerical}
{K. Jiang and P. Zhang}, {\it Numerical mathematics of quasicrystals}, in {\it Proceedings of the International Congress of Mathematicians: Rio de Janeiro 2018}, (2018), 3591--3609.

\bibitem{jiang2024numerical}
{K. Jiang, S. Li and P. Zhang}, {\it Numerical methods and analysis of computing quasiperiodic systems}, SIAM J. Numer. Anal., {\bf62}(2024), 353--375.

\bibitem{jiang2024accurately}
{K. Jiang, Q. Zhou and P. Zhang}, {\it Accurately recover global quasiperiodic systems by finite points}, SIAM J. Numer. Anal., {\bf62}(2024), 1713--1735.

\bibitem{fan2025representation}
A. Fan, K. Jiang, and P. Zhang, 
{\it Representation of quasi-periodic functions and Hausdorff--Young inequalities for Besicovitch almost periodic functions}, 
arXiv preprint arXiv:2512.06821, 2025.

\bibitem{han2026accurately}
X. Han, K. Jiang, and M. Li, 
{\it Accurately computing quasiperiodic parabolic equations within finite-size domains via modeling quasiperiodic boundary conditions}, arXiv preprint arXiv:2608.22241, 2026.

\bibitem{corrias1995numerical}
{L. Corrias, M. Falcone and R. Natalini}, {\it Numerical schemes for conservation laws via Hamilton--Jacobi equations}, SIAM J. Numer. Anal., {\bf62}(2024), 1713--1735.

\bibitem{levine1984quasicrystals}
{D. Levine and P. J. Steinhardt}, {\it Quasicrystals: a new class of ordered structures}, Phys. Rev. Lett., {\bf53}(1984), 2477.

\bibitem{maestrello1979quasi}
{L. Maestrello and Y. Fung}, {\it Quasi-periodic structure of a turbulent jet}, J. Fluid Mech., {\bf64}(1979), 107--122.



\bibitem{poincare1890problem}
{H. Poincar{\'e}}, {\it On the problem of three bodies and equations of dynamics}, Acta Math., {\bf13}(1890), A3--A270.

\bibitem{penrose1974role}
{R. Penrose}, {\it The role of aesthetics in pure and applied mathematical research}, Bull. Inst. Math. Appl., {\bf10}(1974), 266--271.

\bibitem{qian2003two}
{J. Qian}, {\it Two approximations for effective Hamiltonians arising from homogenization of Hamilton--Jacobi equations}, UCLA CAM Report, (2003), 03--39.

\bibitem{cacace2016generalized}
{S. Cacace and F. Camilli}, {\it A generalized Newton method for homogenization of Hamilton--Jacobi equations}, SIAM J. Sci. Comput., (2016), 3589--3617.

\bibitem{falcone2002semi}
{M. Falcone and R. Ferretti}, {\it Semi-Lagrangian schemes for Hamilton--Jacobi equations, discrete representation formulae and Godunov methods}, J. Comput. Phys., (2002), 559--575.

\bibitem{rockafellar1998variational}
{R. T. Rockafellar}, {\it Variational analysis}, Springer, Berlin, 1998.

\bibitem{sochi2014using}
{T. Sochi}, {\it Using the Euler--Lagrange variational principle to obtain flow relations for generalized Newtonian fluids}, Rheol. Acta, {\bf53}(2014), 15--22.

\bibitem{kamrin2014symmetry}
{K. Kamrin and J. D. Goddard}, {\it Symmetry relations in viscoplastic drag laws}, Proc. Roy. Soc. A, {\bf470}(2014), 20140434.

\end{thebibliography}
